\newtheorem{theorem}{Theorem}
\newtheorem{lemma}{Lemma}
\newtheorem{remark}{Remark}
\newtheorem{assumption}{Assumption}
\def\EE{\mathbb{E}}
\def\PP{\mathbb{P}}
\def\RR{\mathbb{R}}
\newcommand{\calC}{{\mathcal{C}}}
\newcommand{\calE}{{\mathcal{E}}}
\newcommand{\calF}{{\mathcal{F}}}
\newcommand{\calG}{{\mathcal{G}}}
\newcommand{\calL}{{\mathcal{L}}}
\newcommand{\calP}{{\mathcal{P}}}
\newcommand{\calX}{{\mathcal{X}}}
\crefname{lemma}{Lemma}{Lemmas}
\Crefname{lemma}{Lemma}{Lemmas}
\crefname{thm}{Theorem}{Theorems}
\Crefname{thm}{Theorem}{Theorems}
\newcommand\numberthis{\addtocounter{equation}{1}\tag{\theequation}}
\DeclareMathOperator*{\argmin}{arg\,min}
\DeclareMathOperator*{\argmax}{arg\,max}
\begin{document}

\title{Multivariate Distributionally Robust Convex Regression under Absolute Error Loss}
\author{%
\And
Jose Blanchet \\
Stanford MS\&E\\
\texttt{jose.blanchet@stanford.edu} \\
\And
\And
Peter W. Glynn \\
Stanford MS\&E\\
\texttt{glynn@stanford.edu} \\
\And
\qquad\quad
\And
Jun Yan \\
Stanford Statistics\\
\texttt{junyan65@stanford.edu} \\
\And
Zhengqing Zhou \\
Stanford Mathematics\\
\texttt{zqzhou@stanford.edu} \\
}

\maketitle

\begin{abstract}
This paper proposes a novel non-parametric multidimensional convex
regression estimator which is designed to be robust to adversarial
perturbations in the empirical measure. We minimize over convex functions
the maximum (over Wasserstein perturbations of the empirical measure) of the
absolute regression errors. The inner maximization is solved in closed form
resulting in a regularization penalty involves the norm of the gradient. We
show consistency of our estimator and a rate of convergence of order $%
\widetilde{O}\left( n^{-1/d}\right) $, matching the bounds of alternative
estimators based on square-loss minimization. Contrary to all of the existing results, our convergence rates hold  without imposing compactness on the underlying domain and with no a priori bounds on the underlying convex function or its gradient norm. 
\end{abstract}

\section{Introduction}

Convex regression estimation arises in a wide range of learning applications,
for example, when fitting demand functions, production curves or utility
functions, see \cite{Hannah2013_convRegression_survey, Varian1982_nonpar_demand, Varian1984_nonpar_production}. Economic theory
often dictates that demand functions are concave, \cite{Allon2007_nonpar_production}. In financial engineering,
stock option prices often exhibit convexity restrictions \cite%
{Ait-Sahalia_2003_nonpar_option}. 
This paper introduces a novel convex regression estimator which, by design, enjoys enhanced robustness properties. This estimator requires no a priori uniform bounds on the underlying convex function or its Lipschitz constant, nor does our estimator require that the domain of the convex function be compact, in contrast to existing convex function estimators that have known convergence rate guarantees. Furthermore, our numerical experiments show that our estimator exhibits good empirical performance, in comparison with existing estimators, and is a promising alternative to existing methods.

Let $X$ be a $d$-dimensional random vector and let $Y$ be a scalar 
random variable. Given a sample $(X_{1},Y_{1}),\cdots ,(X_{n},Y_{n})$ of i.i.d. copies of $(X,Y)$, we adopt the convex regression model 
\begin{equation}
Y_i=f_{\ast }(X_i)+\mathcal{E}_i,  \label{CRM}
\end{equation}
where $f_{\ast }:\mathbb{R}^{d}\rightarrow \mathbb{R}$ is a (unknown) convex function and $\calE_i$ is a zero-median random variable independent of $X_i$, satisfying mild regularity conditions indicated in the sequel. Unlike the existing literature on convex regression (or, more generally, shape-based regression), we base our estimation methodology not on minimizing the squared error loss, but on minimizing mean absolute error loss. We adopt this viewpoint as a means of reducing the sensitivity of our regression estimator to outliers in the data. 


We further wish to regularize our estimator. One vehicle towards accomplishing this goal in a principled fashion is to consider a distributionally robust formulation in which we robustify over a Wasserstein ball around the data, using a diameter that is driven by consistency and convergence rate considerations. When we do this, we arrive at a computationally tractable formulation of the problem that can be solved as a linear program. This is to be contrasted against the quadratic program that arises when minimizing squared error loss. Furthermore, the form of regularization that appears in this problem involves a novel gradient-based penalization term, to be described in more detail later in this Introduction.

In order to introduce our Wasserstein-based distributionally robust
optimization formulation, we first recall how the Wasserstein distance is defined.

First, let $\mathcal{P}(\mathbb{R}^{m}\times \mathbb{R}^{m})$ be the space
of Borel probability measures defined on $\mathbb{R}^{m}\times \mathbb{R}%
^{m} $. Let $\Pi \left( \mu ,\nu\right) $ be the subspace of $\mathcal{P}(%
\mathbb{R}^{m}\times \mathbb{R}^{m})$ with fixed marginals given by $\mu $
and $v$, respectively. That is, if $U\in \mathbb{R}^{m}$, $V\in \mathbb{R}%
^{m}$ are random vectors with joint distribution $\pi \in \mathcal{P}(%
\mathbb{R}^{m}\times \mathbb{R}^{m})$, then $\pi \in \Pi \left( \mu
,\nu\right) $, if the marginal distribution of $U$, $\pi _{U}$, equals $\mu $
and the marginal distribution of $V$, $\pi _{V}$, equals $\nu$. The Wasserstein distance between $\mu $ and $\nu $ is given by 
\begin{equation*}
D(\mu ,\nu ):=\inf \bigg\{\mathbb{E}_{\pi }\left[ c\left( U,V\right) \right]
:\pi \in \mathcal{P}(\mathbb{R}^{m}\times \mathbb{R}^{m}),\pi _{U}=\mu ,\pi
_{V}=\nu \bigg\},
\end{equation*}%
where $c:\mathbb{R}^{m}\times \mathbb{R}^{m}\rightarrow \lbrack 0,\infty ]$
is a metric. In our setting, we have $m=d+1$, and we will choose as our
metric 
\begin{equation}
c\left( \left( x,y\right) ,\left( x^{\prime },y^{\prime }\right) \right)
=\left\Vert x-x^{\prime }\right\Vert _{1}\mathbbm{1}\left( y=y^{\prime }\right)
+\infty \mathbbm{1}\left( y\neq y^{\prime }\right) .  \label{eq.cost}
\end{equation}

We take the view here that distributional uncertainty is incorporated only in terms of the predictors and not
the responses, since the responses already include a measurement
error (in the term $\mathcal{E}$). This type of cost function has been used in
the literature, \cite{RWP_Jose2016}, to exactly recover regularized
estimators such as sqrt-Lasso, among others. It is possible to add
distributional uncertainty in the response. The methods that we propose
allow for adding distributional uncertainty in the response with only a small 
variation in the form of the estimator and
without any change in the learning rates or the assumptions that we impose.
Since the challenge here arises from the multidimensional
aspect of the predictor variable, we decided to mostly impose the distributional
robustness on the predictors.

Now, consider a loss function $l(y,z):\mathbb{R}\times \mathbb{R}\rightarrow 
\mathbb{R}$, which is assumed to be convex and uniformly Lipschitz. Our
distributionally robust convex regression (DRCR) formulation takes the form, 
\begin{equation}
\inf_{f\in \mathcal{F}}\sup_{P\in \mathcal{P}(\mathbb{R}^{d+1}):D(P,P_{n})%
\leq \delta }\mathbb{E}_{P}\left[ l(Y,f(X))\right] ,
\label{eq.DRO_convexRegression}
\end{equation}%
where $\mathcal{F}$ represents the class of convex and Lipschitz functions
(formally defined in Section \ref{sect.convergence_rate}), the parameter $%
\delta :=\delta _{n}>0$ is the uncertainty radius. This radius will be judiciously
chosen as a function of $n$ to obtain consistency and suitable rates of
convergence. The notation $P_{n}$ encodes the empirical distribution of the
observations $(X_{1},Y_{1}),\cdots ,(X_{n},Y_{n})$, namely, 
\begin{equation*}
P_{n}(dx,dy):=\frac{1}{n}\sum_{i=1}^{n}\delta _{\{(X_{i},Y_{i})\}}(dx,dy).
\end{equation*}

Distributionally robust optimization formulations such as (\ref%
{eq.DRO_convexRegression}) have been used in a wide range of settings in the
operations research literature and these formulations have become
increasingly popular in machine learning and statistics.

\textbf{Our main contributions in this paper are as follows.}

\begin{enumerate}[leftmargin = *]
\item[i)] We provide a tractable formulation of (\ref%
{eq.DRO_convexRegression}), in particular, we will show that 
\begin{equation}
\inf_{f\in \mathcal{F}}\sup_{P\in \mathcal{P}(\mathbb{R}^{d+1}):D(P,P_{n})%
\leq \delta }\mathbb{E}_{P}\left[ l(Y,f(X))\right] =\inf_{f\in \mathcal{F}%
}\left\{ \delta L\left\Vert \nabla f\right\Vert _{\infty
}+\EE_{P_{n}}l(Y,f(X))\right\} ,  \label{Rep}
\end{equation}%
where $\|\nabla f\|_{\infty}$ is the largest $l_{\infty}$-norm of all subgradients of $f(x)$ for all $x$, and similarly, $L := \sup_{(y,z) \in \RR \times \RR} | \nabla_z l(y,z) |$ (see
Theorem \ref{thm.strongdual}). Note the penalty term is expressed in terms of the norm of the gradient of the estimator. The appearence
of the $l_{\infty }$-norm is intimately connected to the choice of the $l_{1}$
cost function given in (\ref{eq.cost}). 

\item[ii)] Assuming that $l\left( y,f\left( x\right) \right) =\left\vert
y-f\left( x\right) \right\vert $, we provide statistical guarantees for the
rate of convergence of the estimators obtained in (\ref{Rep}), improving
upon the results obtained using a  quadratic loss
. In particular, we show that if $\|X\|^{\gamma}_{\infty}$ has a finite moment
generating function in a neighborhood of the origin for some $\gamma >0$ and if $\delta _{n}$ is chosen to be 
$\widetilde{O}\left( n^{-2/d}\right) $, then, under suitable regularity conditions on
the residuals (see Theorem \ref{thm.rate}),
\begin{equation*}
\widehat{f}_{n,\delta _{n}}=f^{\ast }+\widetilde{O}\left( n^{-1/d}\right) ,
\label{Rate1}
\end{equation*}%
in a suitable sense, where $\widehat{f}_{n,\delta _{n}}\in \arg \inf_{f\in 
\mathcal{F}}\left\{ \delta _{n}L\left\Vert \nabla f\right\Vert _{\infty
}+\EE_{P_{n}}l(Y,f(X))\right\} $ and the notation $\widetilde{O}\left(
n^{-1/d}\right) $ ignores poly-log factors in $n$. In contrast to the current results in the literature, our rate of convergence does not
require $X$ to have compact support, nor do we need to build an apriori bound on the size
of the gradient of $f$ into our estimator in order to obtain convergence rate result.
\end{enumerate}

Our contributions have several significant features. First, it is not
difficult to see that choosing the absolute error loss $l\left( y,f\left( x\right) \right)
=\left\vert y-f\left( x\right) \right\vert $ makes (\ref{Rep}) equivalent to a
linear programming problem. In fact, since $P_{n}$ is finitely supported,
the problem becomes a finite dimensional linear programming problem. Hence,
this problem is, in principle, easier to solve than the standard
quadratic problem that arises in typical non-parametric convex regression
formulations, which arise when minimizing the squared error loss.

Second, our estimator is naturally endowed with desirable out-of-sample
features due to the presence of the inner maximization, which explores the
impact on the loss function due to statistical variations in the data. This
interpretation follows from the left hand side of (\ref{Rep}). The right
hand side of (\ref{Rep}), on the other hand, shows a direct connection to
regularization in terms of the norm of the gradient of $f$, and the resulting
norm is the dual transportation cost. This regularization term, as we shall
see, allows us to construct an estimator that are free of a priori 
 bounds imposed on the size of the gradient of $f$, which typically are required in order to
obtain statistical guarantees. We now provide a literature review in the
scientific areas touched by our contribution, namely, convex regression
estimation and distributionally robust optimization.

\subsection{Related Literature}

In the context of convex regression, the overwhelming majority of the
literature focuses on empirical least-squares estimators (leading to a
quadratic programming formulation of the same size as the linear programming
formulation that we offer). In one dimension, the work of \cite%
{Wellner2001_1d} proves the consistency of the least squares estimator, and provides a rate of convergence of order $O$($%
n^{-2/5}$) and an asymptotic distribution for this estimator; a matching
upper and lower bounds for the min-max risk (in terms of quadratic loss) was
obtained in \cite{Sen2015_GlobalRisk}, also with the same rate of order $O$($%
n^{-2/5}$) up to a logarithmic factor. The first consistency results in higher dimensional problems
were obtained in \cite{Peter2012_consistency, seijo2011_consistency}.
Associated rates of convergence have only been derived recently,
in \cite{balazs2015_rate, Wellner16_multivariateRisk, Lim2014_rate}, all of which assume that the predictor takes
values on a compact set.  It is shown in these papers that a phase
transition occurs at $d=4$. When $d\leq 4$, the least squares estimator
achieves the convergence rate of $n^{-2/(d+4)}$, which matches the
optimal convergence rate in the non-parametric setting (when $f_{\ast }$ is
a twice continuously differentiable and the data is restricted to lie on a
compact set). However, when $d>4$, the convergence rate of the least squares
estimator deteriorates to $O(n^{-1/d}) $. Moreover, the results
in \cite{Lim2014_rate} and \cite{balazs2015_rate} require apriori knowledge
on $\Vert \nabla f_{\ast }\Vert _{\infty }$ in the construction of their
estimator, while \cite{Wellner16_multivariateRisk} requires knowledge of $%
\Vert f_{\ast }\Vert _{\infty }$. The work of \cite%
{Wellner16_multivariateRisk} shows that under additional smoothness
assumptions, the optimal min-max risk is of order $n^{-2/(d+4)}$, although,
interestingly, no explicit estimator was given to recover such a rate in
dimensions larger than four.

In connection to optimization, our formulation connects to an area
which has been active in operations research for many years, namely, robust
and distributionally robust optimization \cite{Nemirovski_lectures_convexOpt}. Distributionally robust optimization (DRO) problems informed by
optimal transport costs, as in this paper's formulation, have become popular
in recent years not only in operations research but also in the machine
learning community. The work of \cite{logisticDRO_Kuhn2015} is the first one
to show a connection to regularized estimators, in the context of logistic
regression. The paper \cite{RWP_Jose2016} provides an exact recovery of
sqrt-Lasso and support vector machines. The work in \cite{RWP_Jose2016} uses
the DRO formulation to define a statistical criterion to optimally choose
the uncertainty size $\delta $. This criterion, when applied to linear
regression problems, recovers the scalings both in dimension and sample size
obtained in the high-dimensional statistics literature (see, for example,
\cite{Belloni2011_sqrtLasso}). Applications in training of deep neural networks are given in \cite%
{LearningPerformance_DRO_Duchi2018}, and additional representations of other
estimators are given in \cite%
{ DROgroupLasso_Jose2017, DROwithWasserstein_GAO_2016, dataDrivenDRO_tractableFormulation_Kuhn2018}, among others. A key step involved in
obtaining these representations involves a duality result,  which is given in \cite{DRO_modelRisk_Blanchet}.

\subsection{Organization}

The rest of this paper is organized as follows. In Section \ref%
{sect.strong_dual}, we state and prove a strong duality result for the DRCR
formulation in \eqref{eq.dual_formulation}. Section \ref{sect.DRCR} provides
an explicit construction of the DRCR estimator, and in Section \ref%
{sect.convergence_rate}, we show that the convergence rate of this estimator
is at most $\widetilde O(n^{-1/d})$. Finally we
run a simulation study showing that the DRCR estimator can outperform the
standard LSE or kernel based estimator. The proof of Theorem \ref{thm.rate}, 
as well as the main lemmas, is deferred to the supplementary materials.

\section{Main Results}
We first discuss our main result corresponding to the first contribution stated in the Introduction. We later turn to the second contribution. In order to state the strong duality result, we introduce some notations as follows. Let $x=(x_1,\cdots,x_d)$, denoted by $\partial f(x)$ the subdifferential of $f$ at $x$, 
and we define $\partial_{x_i} f(x)$ to be the partial subdifferential of $f$ at $x$ with respect to $x_i$. we define $\|\nabla f\|_{\infty} := \sup_{x\in\RR^d}\max\left\lbrace \|g\|_{\infty}: g\in\partial f(x) \right\rbrace $, and $|\nabla_{x_i}f(x)|:= \max\left\lbrace |g|: g\in\partial_{x_i}f(x)\right\rbrace$. Finally, let 
$\nabla f(x)$ denotes one of the solutions in $ \argmax\left\lbrace \|g\|_{\infty}: g\in\partial f(x) \right\rbrace$. 
\subsection{Dual formulation of DRCR}

\label{sect.strong_dual}

In this section, we establish the strong duality result for the DRCR problem \eqref{eq.DRO_convexRegression}, which plays an important role in the construction of our estimator and the analysis of rate of convergence.
\begin{theorem}[Strong Duality]\label{thm.strongdual} Suppose $l(y,z):\RR\times\RR\rightarrow\RR$ is a convex and Lipschitz function, such that $l(y,z) = l(-y,-z)$. Define
	\begin{equation*}
	L := \sup_{(y,z) \in \RR \times \RR} | \nabla_z l(y,z) |.
	\end{equation*}
Then, for any $\delta\geq0$,
\begin{equation*}
\inf_{f\in\calF}\sup_{P\in\calP(\RR^{d+1}): D(P,P_n)\leq\delta}\EE_{P}\left[l(Y,f(X))\right] =  \inf_{f\in\calF}\left\lbrace\delta L \|\nabla f \|_{\infty} + \frac{1}{n}\sum_{i=1}^n l(Y_i,f(X_i))\right\rbrace.
\end{equation*}
\end{theorem}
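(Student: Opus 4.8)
The plan is to prove the identity for each fixed $f\in\calF$ and then take $\inf_{f\in\calF}$ on both sides. Fix $f\in\calF$, put $M:=\|\nabla f\|_\infty$ (finite since $f$ is Lipschitz; by $\ell_1$/$\ell_\infty$ duality of subgradients, $M$ is exactly the Lipschitz constant of $f$ with respect to $\|\cdot\|_1$), and set $\Phi(x,y):=l(y,f(x))$. The cost $c$ of \eqref{eq.cost} is lower semicontinuous and $\Phi$ is continuous, so the optimal-transport strong duality of \cite{DRO_modelRisk_Blanchet} applies (its hypotheses hold here because $P_n$ has finite support), giving, for $\delta>0$,
\[
\sup_{P:\,D(P,P_n)\le\delta}\EE_P[\Phi(X,Y)]=\inf_{\lambda\ge0}\Big\{\lambda\delta+\frac1n\sum_{i=1}^n\Phi_i(\lambda)\Big\},\qquad \Phi_i(\lambda):=\sup_{(x',y')}\big[\,l(y',f(x'))-\lambda\,c((X_i,Y_i),(x',y'))\,\big].
\]
Since $c=+\infty$ whenever $y'\neq Y_i$, the feasible $y'$ in the definition of $\Phi_i(\lambda)$ is forced to be $Y_i$, so $\Phi_i(\lambda)=\sup_{x'\in\RR^d}\big[\,l(Y_i,f(x'))-\lambda\|x'-X_i\|_1\,\big]$. (The case $\delta=0$ is immediate, since $P_n$ is then the only feasible $P$.)

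The core step is to show
\[
\Phi_i(\lambda)=\begin{cases} l(Y_i,f(X_i)), & \lambda\ge LM,\\[2pt] +\infty, & 0\le\lambda<LM.\end{cases}
\]
The first case is a one-line Lipschitz bound: $x'\mapsto l(Y_i,f(x'))$ is $LM$-Lipschitz in $\|\cdot\|_1$, hence $l(Y_i,f(x'))-\lambda\|x'-X_i\|_1\le l(Y_i,f(X_i))+(LM-\lambda)\|x'-X_i\|_1\le l(Y_i,f(X_i))$, with equality at $x'=X_i$. For the second case I must exhibit a ray from $X_i$ along which $l(Y_i,f(\cdot))$ eventually grows at a rate arbitrarily close to $LM$. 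This uses two facts. (a) There is a coordinate direction $\sigma e_j$, $\sigma\in\{\pm1\}$, with recession slope $f^\infty(\sigma e_j)=M$, where $f^\infty$ is the recession function of $f$; this follows from $\|\nabla f\|_\infty=M$ via the identity $f^\infty(v)=\sup\{\langle g,v\rangle:g\in\partial f(x),\,x\in\RR^d\}$ (valid for closed proper convex $f$) together with the monotone convergence of directional derivatives to the recession slope along the ray. (b) For every $y$, $\lim_{z\to+\infty}\partial_z l(y,z)=L$. Fact (b) is where the hypotheses $l(y,z)=l(-y,-z)$ and joint Lipschitzness are used: symmetry yields $L=\sup_{y,z}\partial_z l(y,z)=\sup_y\lim_{z\to\infty}\partial_z l(y,z)$, and were this limit $<L$ for some $y_0$, then for a suitable $y_1$ (with strictly larger limiting slope) the difference $\partial_z l(y_1,\cdot)-\partial_z l(y_0,\cdot)$ would tend to a positive constant, forcing $l(y_1,z)-l(y_0,z)\to\infty$ and contradicting $|l(y_1,z)-l(y_0,z)|\le\Lip(l)\,|y_1-y_0|$. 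Granting (a) and (b): along $x'(t)=X_i+t\sigma e_j$ one has $f(x'(t))\to+\infty$ with eventual rate $\ge M-\epsilon$, and over the corresponding range of $z$ one has $\partial_z l(Y_i,z)\ge L-\epsilon$, whence $l(Y_i,f(x'(t)))-\lambda t\ge[(L-\epsilon)(M-\epsilon)-\lambda]\,t+O(1)\to+\infty$ for $\epsilon$ small enough (using $\lambda<LM$). The degenerate cases $M=0$ or $L=0$ make $x'\mapsto l(Y_i,f(x'))$ constant and the formula holds trivially.

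Substituting this evaluation of $\Phi_i$ into the dual: the sum is $+\infty$ for $\lambda<LM$, while for $\lambda\ge LM$ the bracketed expression equals $\lambda\delta+\frac1n\sum_i l(Y_i,f(X_i))$, which is nondecreasing in $\lambda$; hence the infimum over $\lambda\ge0$ is attained at $\lambda=LM$ and equals $L\|\nabla f\|_\infty\,\delta+\frac1n\sum_i l(Y_i,f(X_i))$. Taking $\inf_{f\in\calF}$ on both sides gives the theorem. I expect the "$\lambda<LM$" case to be the main obstacle — it amounts to certifying that the worst-case adversary can actually realize the full slope $L\|\nabla f\|_\infty$ — where the recession-function bookkeeping in (a) and the role of symmetry in (b) are the non-routine points; everything else is the cited duality theorem or elementary Lipschitz estimates. (For a self-contained argument one can instead proceed directly: "$\le$" from the Lipschitz bound applied along an optimal coupling, which must keep $Y$ fixed, and "$\ge$" by perturbing $P_n$, shifting a vanishing fraction of the mass at one $X_i$ a large distance along $\sigma e_j$.)
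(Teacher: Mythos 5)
Your proposal is correct and follows essentially the same route as the paper: apply the cited duality lemma of \cite{DRO_modelRisk_Blanchet} for fixed $f$, note the cost forces $y'=Y_i$, show the inner supremum is $+\infty$ when $\lambda < L\|\nabla f\|_{\infty}$ and equals $l(Y_i,f(X_i))$ when $\lambda \ge L\|\nabla f\|_{\infty}$ via the $\ell_1$/$\ell_\infty$ Lipschitz bound, then optimize over $\lambda$ and take the infimum over $f$. The only difference is bookkeeping in the divergence step: you establish that $f$ attains recession slope exactly $\|\nabla f\|_{\infty}$ along some coordinate direction and that $l(Y_i,\cdot)$ has asymptotic slope exactly $L$ (using symmetry and Lipschitzness in $y$ there), whereas the paper works from a near-optimal pair $(y_0,z_0)$ and point $x_0$ and transfers from $y_0$ to $Y_i$ by the Lipschitz constant in $y$ — a minor variation using the same ingredients.
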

By the above theorem, we see that the DRCR \eqref{eq.DRO_convexRegression} problem is essentially equivalent to a regularized empirical loss, where the supremum norm of $\nabla f$ is penalized. 

\begin{proof}[Proof of Theorem \ref{thm.strongdual}]
To begin, we invoke the following lemma
\begin{lemma}[\cite{DRO_modelRisk_Blanchet}]\label{lemma.strong_dual_Jose}
	Given any probability distribution $\mu\in\calP(\RR^d)$, for any upper semi-continuous function $f\in L_1(d\mu)$ and any cost function $c$, the following strong duality holds:
	\begin{equation*}
	\sup_{\nu\in\PP(\RR^d): D(\mu,\nu)\leq\delta} \EE_{\nu} f(X) = \inf_{\lambda\geq 0}\left\lbrace \lambda\delta + \EE_{\mu}\left[\sup_{y\in\RR^d}\left\lbrace f(y) - \lambda c(X,y) \right\rbrace \right]\right\rbrace.
	\end{equation*}
\end{lemma}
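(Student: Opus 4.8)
The plan is to fix $f\in\calF$, apply Lemma~\ref{lemma.strong_dual_Jose} to the inner supremum, and evaluate the resulting one-dimensional dual in closed form. Take $\mu=P_n$, let the integrand be $(x,y)\mapsto l(y,f(x))$ --- continuous (hence upper semi-continuous), since a finite convex $f$ on $\RR^d$ is continuous and $l$ is continuous, and trivially in $L_1(dP_n)$ as $P_n$ is finitely supported --- and take $c$ as in \eqref{eq.cost}. The lemma gives
\[
\sup_{P:\,D(P,P_n)\le\delta}\EE_P\big[l(Y,f(X))\big]=\inf_{\lambda\ge0}\left\{\lambda\delta+\frac1n\sum_{i=1}^n\sup_{(x,y)}\Big[l(y,f(x))-\lambda\,c\big((X_i,Y_i),(x,y)\big)\Big]\right\}.
\]
Since $c((X_i,Y_i),(x,y))=+\infty$ for $y\ne Y_i$, the inner supremum is attained on $\{y=Y_i\}$ and equals $h_i(\lambda):=\sup_{x\in\RR^d}\big[l(Y_i,f(x))-\lambda\|x-X_i\|_1\big]$. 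It then suffices to show that, for every $f\in\calF$,
\[
\inf_{\lambda\ge0}\Big\{\lambda\delta+\tfrac1n\textstyle\sum_{i=1}^n h_i(\lambda)\Big\}=\delta L\|\nabla f\|_\infty+\tfrac1n\textstyle\sum_{i=1}^n l(Y_i,f(X_i)),
\]
because taking $\inf_{f\in\calF}$ on both sides then reproduces the claimed identity.

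For $\lambda\ge L\|\nabla f\|_\infty$: the map $z\mapsto l(Y_i,z)$ is $L$-Lipschitz (as $L=\sup_{y,z}|\nabla_z l(y,z)|$), and $f$ is $\|\nabla f\|_\infty$-Lipschitz for $\|\cdot\|_1$ (by the subgradient inequality and H\"older: $f(x)-f(X_i)\le\langle g,x-X_i\rangle\le\|\nabla f\|_\infty\|x-X_i\|_1$ for $g\in\partial f(x)$, and symmetrically). Hence $l(Y_i,f(x))-l(Y_i,f(X_i))\le L\|\nabla f\|_\infty\|x-X_i\|_1\le\lambda\|x-X_i\|_1$, so $h_i(\lambda)=l(Y_i,f(X_i))$ (the sup attained at $x=X_i$). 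Thus on $[L\|\nabla f\|_\infty,\infty)$ the function $\lambda\mapsto\lambda\delta+\tfrac1n\sum_i h_i(\lambda)$ equals $\lambda\delta+\tfrac1n\sum_i l(Y_i,f(X_i))$, whose infimum over that half-line is $\delta L\|\nabla f\|_\infty+\tfrac1n\sum_i l(Y_i,f(X_i))$.

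It remains to rule out $\lambda<L\|\nabla f\|_\infty$; set $M:=\|\nabla f\|_\infty<\infty$. If $L=0$ or $M=0$ then $l(Y_i,f(\cdot))$ is constant and $h_i\equiv l(Y_i,f(X_i))$, so there is nothing to do; assume $L,M>0$ and fix $\epsilon>0$. Since $M=\sup\{\sigma g_j:x\in\RR^d,\ g\in\partial f(x),\ 1\le j\le d,\ \sigma\in\{-1,1\}\}$, choose $\bar x,g\in\partial f(\bar x),j,\sigma$ with $\sigma g_j>M-\epsilon$; then $f(\bar x+t\sigma e_j)\ge f(\bar x)+t\sigma g_j>f(\bar x)+t(M-\epsilon)\to+\infty$. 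Moreover, since $l$ is (jointly) convex, $z^{-1}l(Y_i,z)$ tends as $z\to+\infty$ to the recession value of $l$ in direction $(0,1)$, which is independent of $Y_i$; the symmetry $l(y,z)=l(-y,-z)$ identifies this with the recession value in direction $(0,-1)$, and combined with $L=\sup_{y,z}|\nabla_z l(y,z)|$ the common value must be $L$. Hence $l(Y_i,z)\ge(L-\epsilon)z-C_\epsilon$ for large $z$, so using $\|\bar x+t\sigma e_j-X_i\|_1\le\|\bar x-X_i\|_1+t$ we get, for large $t$,
\[
l\big(Y_i,f(\bar x+t\sigma e_j)\big)-\lambda\|\bar x+t\sigma e_j-X_i\|_1\ \ge\ \big[(L-\epsilon)(M-\epsilon)-\lambda\big]t+\mathrm{const},
\]
the right side diverging to $+\infty$ as $t\to\infty$ whenever $\lambda<(L-\epsilon)(M-\epsilon)$. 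Letting $\epsilon\downarrow0$ yields $h_i(\lambda)=+\infty$ for all $\lambda<LM$. So $\lambda\mapsto\lambda\delta+\tfrac1n\sum_i h_i(\lambda)$ is $+\infty$ on $[0,LM)$ and equals $\lambda\delta+\tfrac1n\sum_i l(Y_i,f(X_i))$ on $[LM,\infty)$; its infimum over $\lambda\ge0$ is $\delta LM+\tfrac1n\sum_i l(Y_i,f(X_i))$, and taking $\inf_{f\in\calF}$ finishes the proof.

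The Lipschitz estimate in the second paragraph is routine; the crux is the third paragraph. Two points need care there: exhibiting a coordinate axis along which $f$ grows at (nearly) the maximal rate $\|\nabla f\|_\infty$ and pushing the argument of $l$ to infinity along it; and identifying the recession slope of $z\mapsto l(Y_i,z)$ with $L$. The latter is exactly where joint convexity (to make that slope independent of $Y_i$) and the symmetry $l(y,z)=l(-y,-z)$ (to pin it to $L$, not merely $\le L$) are used --- essentially the only place the symmetry hypothesis enters. The degenerate cases $\delta=0$, $\|\nabla f\|_\infty=0$, $L=0$ are all immediate and subsumed above.
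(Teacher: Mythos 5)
The statement you were asked to prove is Lemma \ref{lemma.strong_dual_Jose} itself---the Wasserstein DRO duality $\sup_{\nu:\,D(\mu,\nu)\le\delta}\EE_\nu f(X)=\inf_{\lambda\ge0}\{\lambda\delta+\EE_\mu[\sup_y\{f(y)-\lambda c(X,y)\}]\}$---which the paper does not prove but imports from \cite{DRO_modelRisk_Blanchet}. Your proposal never addresses it: the very first step is to ``apply Lemma \ref{lemma.strong_dual_Jose} to the inner supremum,'' i.e.\ you assume the statement to be proved and use it to derive Theorem \ref{thm.strongdual}. As a proof of the lemma this is circular, and its substantive content is entirely missing. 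A genuine proof needs (i) weak duality: for any $\lambda\ge 0$ and any coupling $\pi\in\Pi(\mu,\nu)$ with $\EE_\pi c(U,V)\le\delta$ one has $f(V)\le\lambda c(U,V)+\sup_y\{f(y)-\lambda c(U,y)\}$, hence $\EE_\nu f\le\lambda\delta+\EE_\mu\sup_y\{f(y)-\lambda c(X,y)\}$; and (ii) the absence of a duality gap, which is the hard part and in \cite{DRO_modelRisk_Blanchet} rests on a nontrivial argument (measurable selection of near-maximizers of the inner supremum to construct near-optimal feasible measures, or an infinite-dimensional minimax/LP duality), valid for upper semi-continuous integrands and lower semi-continuous costs that may take the value $+\infty$, as the cost in \eqref{eq.cost} does. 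None of these ingredients appears in your write-up.

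What you did prove---taking the lemma for granted---is Theorem \ref{thm.strongdual}, and that part of your argument is correct and essentially identical to the paper's proof of that theorem: the same dichotomy between $\lambda\ge L\|\nabla f\|_\infty$ (inner supremum attained at $x=X_i$ via the two Lipschitz bounds) and $\lambda<L\|\nabla f\|_\infty$ (inner supremum $=+\infty$ by pushing $x$ along a coordinate direction of near-maximal subgradient), with the symmetry $l(y,z)=l(-y,-z)$ used to pin the slope of $l$ at infinity to $L$. Your handling of non-attained suprema through $\epsilon$-near-maximizers and recession slopes is in fact a bit more careful than the paper's, but it establishes a different statement from the one in question, so the gap stands.
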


As a direct consequence of Lemma \ref{lemma.strong_dual_Jose}, we have for any $f\in\calF$ that
\begin{align*}
&\sup_{\calP\in\RR^{d+1}: D(P,P_n)\leq\delta}\EE_P\left[l(Y,f(X))\right] \\
=\quad & \inf_{\lambda\geq 0}\left\lbrace \lambda\delta+\EE_{P_n}\left[\sup_{(x,y) \in\RR^d\times\RR}\left\lbrace l(y,f(x))-\lambda c\left((X,Y),(x,y)\right)\right\rbrace \right]\right\rbrace\\
=\quad & \inf_{\lambda\geq 0}\left\lbrace \lambda\delta + \frac{1}{n}\sum_{i=1}^n\sup_{x\in\RR^d}\left\lbrace l(Y_i, f(x)) - \lambda\|x-X_i\|_1\right\rbrace \right\rbrace. \numberthis \label{eq.dualproof}
\end{align*}
For simplicity, let $\nabla_i f(x)$ denotes the $i$th coordinate of $\nabla f(x)$, ($1\leq i\leq d$). Suppose $\lambda< L \|\nabla f \|_{\infty}$ , then there exists $y_0\in\RR$, $z_0\in\RR$, $x_0\in\RR^d$ and $i_0 \in \{1, \ldots, d\}$, such that $\lambda< |\nabla_z l(y_0,z_0)|\cdot |\nabla_{i_0}f(x_0)|$. Without lost of generality, we may assume that $\nabla_z l(y_0,z_0) \nabla_{i_0}f(x_0) > 0$. Otherwise, we consider $(-y_0, -z_0)$. We may consider the case that both $\nabla_z l(y_0,z_0), \nabla_{i_0}f(x_0) > 0$, since the case in which both of them are negative is similar.
Let $\{e_i\}_{i=1}^d$ be the canonical basis of $\RR^d$, if $x_t:=x_0+t\cdot e_{i_0}\in\RR^d$, then $f(x_t)$ is a convex function of $t$. Moreover, under the above assumptions, we have $f(x_t)\rightarrow +\infty$ as $t\rightarrow +\infty$. Hence, together with the convexity of $l$, for $t>0$ sufficiently large,
\begin{align*}
&l(Y_i.f(x_t)) - \lambda\|x_t-X_i\|_1 \\
\geq \quad & l(y_0, f(x_t)) - \lambda\|x_t-x_0\|_1 - L_0|y_0-Y_i| - \lambda\|x_0-X_i\| \\
\geq \quad & l(y_0, z_0) + \nabla_z l(y_0,z_0) \cdot(f(x_t)-z_0)-\lambda t- L_0|y_0-Y_i| - \lambda\|x_0-X_i\|\\
\geq \quad & (\nabla_z l(y_0,z_0) \nabla_{i_0}f(x_0) -\lambda)t
+ \nabla_z l(y_0,z_0) \cdot(f(x_0)-z_0) + l(y_0,z_0) - L_0|y_0-Y_i|\\
\quad &  - \lambda\|x_0-X_i\|,
\end{align*}
where $L_0 := \sup_{(y,z) \in \RR \times \RR} | \nabla_y l(y,z) |<\infty$. By taking the supremum over $t$, we have 
\begin{equation*}
\sup_{x\in\RR^d}\left\lbrace l(Y_i, f(x)) - \lambda\|x-X_i\|_1\right\rbrace = \infty.
\end{equation*}
On the other hand, if $\lambda \geq L \|\nabla f \|_{\infty}$, we have for any $x \in \RR^d$ that
\begin{align*}
l(Y_i, f(x)) - l(Y_i, f(X_i))
\leq L \|\nabla f \|_{\infty} \|x-X_i\|_1
\leq \lambda \|x-X_i\|_1,
\end{align*}
where the equality holds if $x=X_i$. Hence
\begin{equation*}
\sup_{x\in\RR^d}\left\lbrace l(Y_i, f(x)) - \lambda\|x-X_i\|_1\right\rbrace
= l(Y_i, f(X_i)).
\end{equation*}
Now, we can rewrite the equation \eqref{eq.dualproof} as
\begin{align*}
\sup_{\nu\in\PP(\RR^d): D(\mu,\nu)\leq\delta} \EE_{\nu} f(X)
=\quad& \inf_{\lambda\geq L \|\nabla f \|_{\infty}}\left\lbrace \lambda\delta + \frac{1}{n}\sum_{i=1}^n l(Y_i, f(X_i))\right\rbrace\\
=\quad &\delta L \|\nabla f \|_{\infty}  + \frac{1}{n}\sum_{i=1}^n l(Y_i, f(X_i)).
\end{align*}
\end{proof}

\subsection{Construction of the DRCR Estimator}

\label{sect.DRCR}

To construct the DRCR estimator, we focus now on the absolute error loss $l(y,f(x))=|y-f(x)|$. Consider the following class of convex and Lipschitz functions:
\begin{equation*}
\calF_n :=
\{
f:
\text{$f$ is convex},
\|\nabla f \|_{\infty} \leq \log n
\}.
\end{equation*}
It can be checked directly that the loss function $l$ satisfies the requirements in Theorem \ref{thm.strongdual} with the constant $L=1$, so, we can rewrite the DRCR problem \eqref{eq.DRO_convexRegression} as follows:
\begin{equation}\label{eq.dual_formulation}
\inf_{f\in\calF_n}\left\lbrace \delta\|\nabla f\|_{\infty} + \frac{1}{n}\sum_{i=1}^n l(Y_i,f(X_i))\right\rbrace.
\end{equation}
Now we construct an estimator $\widehat{f}_{n,\delta}$ that solve the problem \eqref{eq.dual_formulation}. Consider the following finite dimensional linear programming (LP)
\begin{equation}\label{eq.discrete_LP}
\begin{aligned}
\min_{g_i,\xi_i} \quad &  \frac{1}{n}\sum_{i=1}^nl(Y_i,g_i) + \delta \max_{1\leq i\leq n}\|\xi_i\|_{\infty}.\\
\textrm{s.t.} \quad & g_j\geq g_i + \langle \xi_i, X_j - X_i\rangle, \quad 1\leq i,j\leq n.\\
& |\xi_i^k|\leq \log n, \textrm{ where } \xi_i =(\xi_i^1,\cdots,\xi_i^d), 1\leq i\leq n.
\end{aligned} 
\end{equation}
Let $(\widehat{g}_1,\widehat{\xi}_1),\cdots,(\widehat{g}_n,\widehat{\xi}_n)$ be any solution of problem \eqref{eq.discrete_LP}. Then, we can define the DRCR estimator by
\begin{equation}\label{eq.estimator}
\widehat{f}_{n,\delta}(x):= \max_{1\leq i\leq n} \left(\widehat{g}_i + \langle\widehat{\xi}_i, x - X_i\rangle\right),
\end{equation}
where $\langle\cdot,\cdot\rangle$ is the standard inner product. Next, we show that $\widehat{f}_{n,\delta}$ also solves the problem \eqref{eq.dual_formulation}. In fact,  $\widehat{f}_{n,\delta}$ is a solution to the problem 
\begin{equation*}
\inf_{f\in\calF_n}\left\lbrace \delta\sup_{1\leq i\leq n}\|\nabla f(X_i)\|_{\infty} + \frac{1}{n}\sum_{i=1}^n l(Y_i,f(X_i))\right\rbrace,
\end{equation*}
where the objective value certainly serves as a lower bound for that of  \eqref{eq.dual_formulation}. Moreover, observe that $\|\nabla\widehat{f}_{n,\delta}\|_{\infty} = \max_{1\leq i \leq n}\|\widehat{\xi}_i\|_{\infty}=\sup_{1\leq i\leq n}\|\nabla f(X_i)\|_{\infty}$, hence $\widehat{f}_{n,\delta}$ is also a solution of \eqref{eq.dual_formulation}.

\subsection{Rate of Convergence}\label{sect.convergence_rate}
In order to state our rate of convergence result, corresponding the second contribution stated in the Introduction, we need to impose some assumptions and state some definitions.
 
Let $\calP(\RR^n)$ denote the set of all probability measures supported on $\RR^n$. 
Given a metric space $(\calX,\rho)$ and any subset $\calG\subset \calX$, the $\varepsilon-$covering number $M(\calG,\varepsilon;\rho)$ is defined as the smallest number of balls with radius $\varepsilon$ whose union contains $\calG$, and let $A_{\varepsilon}$ denotes any corresponding $\varepsilon$-covering set. We say a random variable $W$ is $\sigma$-sub-Gaussian if its Orlicz norm $\|W\|_{\psi_2}:= \sup_{k\geq 1}k^{-1/2}\left(\EE|W-\EE W|^k\right)^{1/k}\leq \sigma$, which is equivalent to the standard definition of sub-Gaussian random variable, see \cite{vershynin_2012}. Furthermore, we use  standard Landau's asymptotic notations as follows: for two non-negative sequences $\{a_n\}$ and $\{b_n\}$, let $a_n=O(b_n)$ iff $\limsup_{n\to\infty} a_n/b_n<\infty$, $a_n=\Theta(b_n)$ iff $a_n=O(b_n)$ and $b_n=O(a_n)$, and $a_n=\widetilde{O}(b_n)$ iff for some $a_n=O(b_n)$ up to a poly-log factor of $b_n$.

We assume that the data $\{(X_i,Y_i)\}_{i=1}^n$ are i.i.d samples from $P$. To analyze the asymptotic behavior of the DRCR estimator, we shall impose the following assumptions on the distribution of $X$ and the random variable $\calE$ in \eqref{CRM}.
\begin{assumption}\label{aspn.momentP}
There exists some $\alpha, \gamma > 0$ such that
\begin{equation}
    \EE \exp\left(\alpha \|X\|^{\gamma}_{\infty}\right) < \infty.
\end{equation}
\end{assumption}
\begin{assumption}\label{aspn.epsilon}
The distribution of $\calE$ is $\sigma$-sub-Gaussian for some $\sigma>0$, symmetric about zero, and has a continuous positive density $p_{\calE}(\cdot)$ in a neighborhood of $0$.
\begin{remark}
Assumption \ref{aspn.momentP} allows the study of random variables (such as Weibull random variables) exhibiting heavy tail behavior  \cite{Embrechts1997_extremalEvents}. 
\end{remark}
\begin{remark}
The assumptions on the symmetry and the density, ensure that $0$ is the unique median of $\calE$. As is standard in statistical formulations involving absolute error minimization, this assumption is needed to guarantee the consistency of our estimator.
\end{remark}

\end{assumption}

\newcommand \ballsize {\epsilon}

\def\el{l_1}

In the rest of this section, we study the convergence rate of the DRCR estimator $\widehat{f}_{n,\delta_n}$ introduced in Section \ref{sect.DRCR}. We consider the general question of convergence rate for robustified estimators of the form
\begin{equation}\label{eq.g_hat}
    \widehat{g}_{n,\delta_n}(x)
    \in \argmin_{f \in \calF_n}
    \left \{
    \sup_{P\in\calP(\RR^{d+1}): D_c(P,P_n)\leq\delta_n}\EE_{P}\left[l(Y,f(X))\right]
    \right \}.
\end{equation}
We will show that by a suitable choice of $\delta_n$, the convergence rate of 
$\widehat{g}_{n,\delta_n}$ to $f_*$ under the empirical $l_1$ loss is of order $\widetilde O\left(n^{-1/d}\right)$, where the  empirical $l_1$ loss of any two functions $f, g$ is defined as
\[
\el(f,g):=
\frac{1}{n}\sum_{i=1}^n|f(X_i)-g(X_i)|.
\]
Now we state our main theorem. The proof details are deferred to the supplementary materials (Appendix A). 
\begin{theorem}\label{thm.rate}
If $\|\nabla f_*\|_{\infty}<\infty$ and $d>4$, and Assumption \ref{aspn.momentP} and  \ref{aspn.epsilon} hold, we can pick a $\delta_n$ of order $ \Theta(n^{-\frac{2}{d}} (\log n)^{1 + \frac{3}{
		\gamma}})$ so that for any $\widehat{g}_{n,\delta_n}(\cdot)$ defined via  \eqref{eq.g_hat},
there exists some constant $C > 0$ such that 
\begin{equation}
\PP \left (
\el (\widehat{g}_{n,\delta_n}, f_*) 
>
C n^{-\frac{1}{d}} (\log n)^{\frac{\gamma + 3}{2\gamma}}
\right )
\rightarrow 0  \quad \textrm{as } n\rightarrow\infty.
\end{equation}
\end{theorem}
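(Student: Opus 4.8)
\emph{Step 1: reduction to a growing compact domain and a crude \emph{a priori} bound.} By Theorem~\ref{thm.strongdual} (with $L=1$), $\widehat g_{n,\delta_n}$ minimizes $\delta_n\|\nabla f\|_\infty+\tfrac1n\sum_i|Y_i-f(X_i)|$ over $f\in\calF_n$, and since $\|\nabla f_*\|_\infty<\infty\le\log n$ for $n$ large, $f_*\in\calF_n$, so there is no approximation bias. Using Assumption~\ref{aspn.momentP} and a Chernoff bound applied to $\EE\exp(\alpha\|X\|_\infty^\gamma)$, on an event $\calA_n$ with $\PP(\calA_n)\to1$ all covariates lie in the cube $\calB_n:=\{x\in\RR^d:\|x\|_\infty\le R_n\}$, $R_n\asymp(\tfrac2\alpha\log n)^{1/\gamma}$. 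Comparing the objective of $\widehat g_{n,\delta_n}$ with that of $f_*$ gives $\tfrac1n\sum_i|Y_i-\widehat g_{n,\delta_n}(X_i)|\le\tfrac1n\sum_i|\calE_i|+\delta_n\|\nabla f_*\|_\infty=O_P(1)$, hence $\el(\widehat g_{n,\delta_n},f_*)=O_P(1)$; combining this with the $\log n$-Lipschitz property of $\widehat g_{n,\delta_n}$ and the concentration of the covariates, a routine argument produces $\max_i|\widehat g_{n,\delta_n}(X_i)-f_*(X_i)|\le M_n$ on an event of probability $\to1$, where $M_n=\mathrm{polylog}(n)$. We may therefore restrict the infimum in \eqref{eq.g_hat} to the (data-dependent) class $\widetilde\calF_n$ of $f\in\calF_n$ with $\tfrac1n\sum_i|Y_i-f(X_i)|\le\tfrac1n\sum_i|\calE_i|+\delta_n\|\nabla f_*\|_\infty$ and $\max_i|f(X_i)-f_*(X_i)|\le M_n$.

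\emph{Step 2: basic inequality and curvature of the absolute-error risk.} Writing $\Delta_f:=f-f_*$ and $Y_i=f_*(X_i)+\calE_i$, minimality of $\widehat g_{n,\delta_n}$ and $f_*\in\calF_n$ give the basic inequality $\tfrac1n\sum_i\big(|\calE_i-\Delta_{\widehat g}(X_i)|-|\calE_i|\big)\le\delta_n\|\nabla f_*\|_\infty$, where $\widehat g=\widehat g_{n,\delta_n}$. Let $g(t):=\EE[|\calE-t|-|\calE|]$; then $g$ is convex, $g(0)=0$, $g'(t)=2F_\calE(t)-1$, and $g'(0)=0$, $g''(0)=2p_\calE(0)>0$ by Assumption~\ref{aspn.epsilon} (symmetry and a positive continuous density near $0$ make $0$ the unique median). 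Together with the strict positivity of $g'$ away from $0$ this yields a quantitative lower bound $g(t)\ge c_0\,(t^2\wedge|t|)$ for a constant $c_0$ depending only on $p_\calE$ near the origin.

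\emph{Step 3: localized empirical process and assembly.} For fixed $f$, the centered summands $h_f(X_i,\calE_i):=|\calE_i-\Delta_f(X_i)|-|\calE_i|-g(\Delta_f(X_i))$ satisfy $\EE[h_f\mid X_1,\dots,X_n]=0$, $|h_f|\le|\Delta_f(X_i)|$ and $\mathrm{Var}(h_f\mid X_i)\le\Delta_f(X_i)^2$. One controls $\Phi_n(\rho):=\sup\big\{|\tfrac1n\sum_i h_f(X_i,\calE_i)|:f\in\widetilde\calF_n,\ \tfrac1n\sum_i\Delta_f(X_i)^2\le\rho^2\big\}$ by chaining: the functions $\Delta_f|_{\calB_n}$ are differences of a fixed convex function and a convex $\log n$-Lipschitz function on a cube of radius $R_n$, with sup-norm $O(M_n)$, so a rescaled Bronshtein-type entropy estimate gives $\log N(\varepsilon;\cdot,\|\cdot\|_{L^2(P_n)})\lesssim (M_n/\varepsilon)^{d/2}$; since $d>4$ the entropy integral is truncated and a single-scale Bernstein bound at the cut-off yields $\Phi_n(\rho)\lesssim (\mathrm{polylog}\,n)\big(n^{-1/2}\rho^{1-d/4}+n^{-4/d}\big)$. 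Now on $\calA_n$, bounding the left side of the basic inequality from below by $\tfrac1n\sum_i g(\Delta_{\widehat g}(X_i))-\Phi_n(\rho)$ and the $g$-term by $c_0Q$ with $Q:=\tfrac1n\sum_i\big(\Delta_{\widehat g}(X_i)^2\wedge|\Delta_{\widehat g}(X_i)|\big)$, while $\tfrac1n\sum_i\Delta_{\widehat g}(X_i)^2\le(1+M_n)Q$ by the $M_n$-bound, a dyadic peeling over $Q$ gives, with probability $\to1$, $Q\le C\big(\delta_n\|\nabla f_*\|_\infty+\Phi_n(\sqrt{(1+M_n)Q})\big)$; because $d>4$, both error terms $n^{-1/2}(M_nQ)^{(1-d/4)/2}$ and $n^{-4/d}$ are of strictly smaller order than $\delta_n\|\nabla f_*\|_\infty=\Theta(n^{-2/d}(\log n)^{1+3/\gamma})$ at the relevant scale, so $Q\lesssim\delta_n\|\nabla f_*\|_\infty$.

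\emph{Step 4: conclusion.} Splitting according to whether $|\Delta_{\widehat g}(X_i)|\le1$ and using Cauchy--Schwarz gives $\el(\widehat g_{n,\delta_n},f_*)=\tfrac1n\sum_i|\Delta_{\widehat g}(X_i)|\le\sqrt{Q}+Q$, hence with probability $\to1$, $\el(\widehat g_{n,\delta_n},f_*)\lesssim\sqrt{\delta_n\|\nabla f_*\|_\infty}=\Theta\big(n^{-1/d}(\log n)^{(\gamma+3)/(2\gamma)}\big)$, which is the claimed bound. The main obstacle is Step~3: controlling the localized empirical process over a class of convex functions whose domain radius $R_n$ and Lipschitz budget $\log n$ both diverge, and then running the peeling so that (precisely because $d>4$) the penalty term $\delta_n\|\nabla f_*\|_\infty$, and not the empirical-process fluctuation, sets the rate; a secondary technical point is the \emph{a priori} $L^\infty$-type control $\max_i|\widehat g_{n,\delta_n}(X_i)-f_*(X_i)|\le M_n$ of Step~1, which is what keeps the $M_n$ factors confined to the (subdominant) error terms and therefore out of the leading $(\log n)$-power.
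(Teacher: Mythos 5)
Your overall architecture is sound and, despite the repackaging, is essentially the paper's proof in the standard M-estimation format: the reduction to a polylog-bounded convex Lipschitz class on a growing cube (the paper gets this from Lemmas~\ref{lemma.support} and \ref{lemma.inf_norm}, you get it from the objective comparison plus Lipschitzness — both work), the curvature of $t\mapsto\EE[|\calE-t|-|\calE|]$ via the positive density at the median (Lemma~\ref{lemma:Zn_bound}), sub-Gaussian control of the centered increments (Lemma~\ref{lemma.subGuassian}), Bronshtein entropy, and truncated chaining because $d>4$, with the penalty $\delta_n\|\nabla f_*\|_\infty$ absorbing the fluctuation. Steps 1, 2 and 4 are fine (modulo trivia such as $|h_f|\le 2|\Delta_f(X_i)|$).

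The genuine gap is the quantitative claim at the heart of Step 3, namely $\Phi_n(\rho)\lesssim(\mathrm{polylog}\,n)\bigl(n^{-1/2}\rho^{1-d/4}+n^{-4/d}\bigr)$, and the conclusion you draw from it that both fluctuation terms are ``of strictly smaller order than $\delta_n\|\nabla f_*\|_\infty$.'' With entropy $\log N(\varepsilon)\lesssim (B_n/\varepsilon)^{d/2}$, $B_n=\mathrm{polylog}(n)$, chaining truncated at a cutoff $\epsilon$ costs $n^{-1/2}B_n^{d/4}\epsilon^{1-d/4}$ for the chain plus $\epsilon$ for the (necessarily deterministic, sup-norm) remainder at the cutoff — a single-scale Bernstein bound cannot beat $\epsilon$ uniformly over an $\epsilon$-ball with unbounded small-scale entropy. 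Optimizing gives an additive term of order $B_n\,n^{-2/d}$, not $n^{-4/d}$; indeed, forcing the cutoff down to $n^{-4/d}$ makes the chaining term of order $n^{1/2-4/d}$, which diverges for $d>8$, and an additive term $n^{-4/d}$ would also contradict the known $n^{-2/d}$ squared-$L_2$ behavior of convex regression for $d>4$. Consequently the fluctuation is of the \emph{same} polynomial order $n^{-2/d}$ as $\delta_n$, and the penalty dominates only through the polylog factor: this is precisely why $\delta_n$ carries $(\log n)^{1+3/\gamma}$ and why the paper's proof does the explicit bookkeeping with $\varepsilon_0$, $t_j$, $N_n$ and the constant $96$ rather than arguing by order of magnitude. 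Your argument is repairable — with cube radius $R_n\asymp(\log n)^{1/\gamma}$ one gets $B_n\asymp(\log n)^{1+1/\gamma}$, so $B_n n^{-2/d}$ is indeed $o(\delta_n)$ — but as written the key bound is false and you never carry out the polylog comparison that actually sets the rate, which is the delicate part of the theorem.
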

In particular, the DRCR estimator $\widehat{f}_{n,\delta_n}$ defined in \eqref{eq.estimator} also enjoys the rate of $\widetilde{O}(n^{-1/d})$, which is the best known rate so far (compare to  \cite{balazs2015_rate, Wellner16_multivariateRisk, Lim2014_rate}). In contrast to prior work, the estimation are not defined in terms of a priori bounds on $\|f_*\|_{\infty}$ and $\|\nabla f_*\|_{\infty}$.

\section{Numerical Experiments}

\subsection{Synthetic datasets}
In this section we investigate the performance of our estimator $\widehat{f}_{n,\delta}$, and compare it with the least squares estimator (LSE) of convex regression in \cite{Lim2014_rate}, as well as the kernel smoothing estimator. We conduct the experiments in the following setting. For each $d$ and $n$, we generate i.i.d. random variables $X_i \in \RR^d, i=1\ldots n$ such that each coordinate of $X_i$ are i.i.d. from $N(0,1)$, or a standard Student’s t-distribution with 10 degrees of freedom. We include this heavy-tailed specification to empirically test the impact of Assumption \ref{aspn.momentP} in our estimator. The results suggest that even if such assumption is violated, our estimator still performs remarkably well.

Let $f_*: \RR^d  \rightarrow \RR$ such that
\begin{equation*}
    f_*(x)
    = \sum_{i=1}^d |x_i|, \quad  x = (x_1, \ldots, x_d).
\end{equation*}
We generate $Y_i, i=1\ldots d$ by $ Y_i = f_*(X_i) + \calE_i$, 
where the noises $\calE_i$ are sampled i.i.d. from  $N(0,\sigma^2)$. 

We construct our DRCR estimator $\widehat{f}_{n,\delta_n}$ by taking $\delta_n = n^{-2/d}$. For the LSE of convex regression, in line with the setting in \cite{balazs2015_rate, Lim2014_rate}, let $c$ be any numerical constant  greater than $\|\nabla f_*\|_{\infty}$, and we consider the class of functions
\begin{equation*}
\calF_c :=
\{
f:
\text{$f$ is convex},
\|\nabla f \|_{\infty} \leq c
\}.
\end{equation*}
Let $\widehat{f}_{n,c}^{\text{LS}}$ be the least squares convex regression estimator, namely, 
\begin{equation*}
\widehat{f}_{n,c}^{\text{LS}}
=
\argmin_{f\in\calF_c}\left\lbrace  \frac{1}{n}\sum_{i=1}^n (Y_i - f(X_i))^2\right\rbrace.
\end{equation*}
In \cite{balazs2015_rate, Lim2014_rate} it is shown that $\widehat{f}_{n,c}^{\text{LS}}$ converges to $f_*$ for any $c > \|\nabla f_*\|_{\infty}$. Given that $\|\nabla f_*\|_{\infty} = 1$, we set $c=10$ or $0.8$,  since in practice we typically do not have a tight bound for $\|\nabla f_*\|_{\infty}$ (we may overestimate/underestimate $\|\nabla f_*\|_{\infty}$). 

Next we construct the kernel regression estimator. Although not required to be convex, the kernel estimator is a good benchmark comparison choice, in the non-parametric setting. For some bandwidth $h_n > 0$, we define the kernel regression estimator $\widehat{k}_{n,h_n}$ by $    \widehat{k}_{n,h_n}(x) 
= \sum_{i=1}^n Y_i K(\frac{x - X_i}{h_n})/\sum_{i=1}^n K(\frac{x - X_i}{h_n})$,
where $K: \RR^d \rightarrow \RR$ denotes the Gaussian kernel with
$K(x) = (2 \pi)^{-\frac{d}{2}}e^{-\|x \|^2/2}$. We then choose the best bandwidth $h_n$ via cross validation. To be specific, we pick $h_n = C n^{-\frac{1}{d+4}}$, and then optimize the choice $C$ via line search. That is, for each $1 \leq j \leq n$, let $    \widehat{k}_{n,h_n}^{(-j)}(x) 
= \sum_{i=1, i \neq j}^n Y_i K(\frac{x - X_i}{h_n})/\sum_{i=1, i \neq j}^n K(\frac{x - X_i}{h_n})$
and we select $C$ to be the minimizer of
\begin{equation*}
    \min_{C \in \{j/100, 1\leq j \leq 100\}}
    \sum_{i=1}^n \left(Y_i - \widehat{k}_{n,C n^{-1/(d+4)}}^{(-i)}(X_i) \right)^2.
\end{equation*}
Define the  empirical $l_2$ loss of any two functions $f, g$ as
\[
l_2(f,g):=
\left( \frac{1}{n}\sum_{i=1}^n|f(X_i)-g(X_i)|^2\right)^{\frac{1}{2}}.
\]
In the experiments, we set $d = 5$, $n \in \{50, 100, 150, 200, 250, 300 ,350\}$ and $\sigma = 0.2$. We compare the performance of $\widehat{f}_{n,\delta_n}$, $\widehat{f}_{n,0.8}^{\text{LS}}$, $\widehat{f}_{n,10}^{\text{LS}}$ and $\widehat{k}_{n,h_n}$ under both the empirical $l_1$ and $l_2$ losses. For each choice of $n$ and $d$, we repeat the simulation  $100$ times and calculate their average. 

We first sample i.i.d. $X_i \sim N(0,I_d)$ for the light tail case that satisfying Assumption \ref{aspn.momentP}. To compare, we also sample i.i.d. heavy tail random variable $X_i$  such that coordinates of $X_i$ are i.i.d. from the t-distribution with parameter $10$. The results of the experiment follow. 

\begin{figure}[H]
\centering

\begin{subfigure}{.42\textwidth}
\includegraphics[width=1\textwidth]{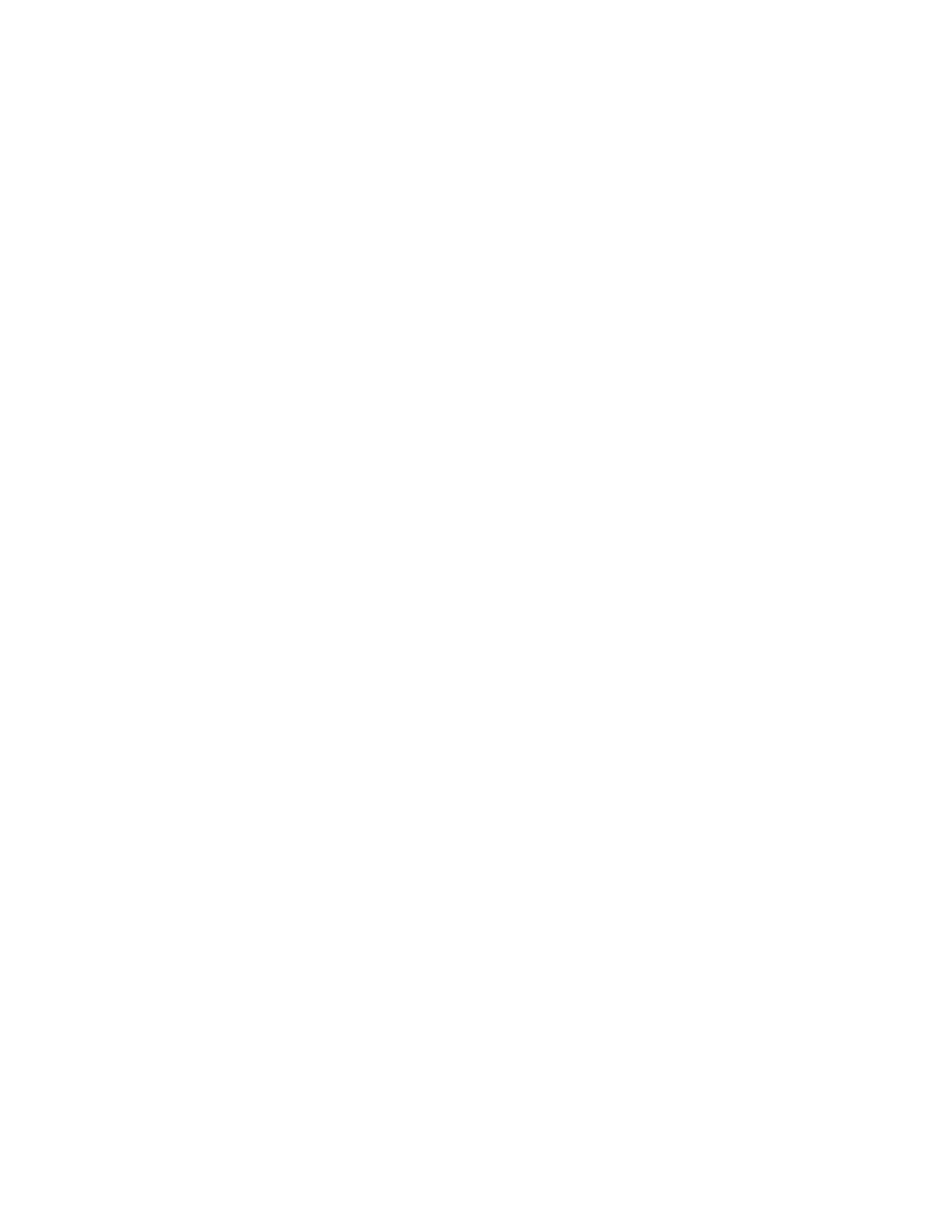}
\caption{Light tail covariates, $l_1$ loss}\label{fig.1a}
\end{subfigure}
\begin{subfigure}{.42\textwidth}
\includegraphics[width=1\textwidth]{badl2.pdf}

\caption{Light tail covariates, $l_2$ loss}\label{fig.b}
\end{subfigure}

\begin{subfigure}{.42\textwidth}
	\includegraphics[width=1\textwidth]{Tl1.pdf}
	
	\caption{Heavy tail covariates, $l_1$ loss}\label{fig.2a}
\end{subfigure}
\begin{subfigure}{.42\textwidth}
	\includegraphics[width=1\textwidth]{Tl2.pdf}
	
	\caption{Heavy tail covariates, $l_2$ loss}\label{fig.b2}
\end{subfigure}
\caption{In the above plots, the blue solid line stands for the estimator $\widehat{f}_{n,\delta}$, the black dotted line stands for $\widehat{f}_{n,0.8}^{\text{LS}}$, the red dash-dot line stands for the estimator $\widehat{f}_{n,10}^{\text{LS}}$, and the green dashed line stands for the kernel estimator $\widehat{k}_{n,h_n}$.}\label{fig.1}
\end{figure}

From the Figure \ref{fig.1} 
in above, we observed that our estimator $\widehat{f}_{n,\delta}$ outperforms $\widehat{f}_{n,0.8}^{\text{LS}}$, $\widehat{f}_{n,10}^{\text{LS}}$ and $\widehat{k}_{n,h_n}$ in both $l_1$ and $l_2$ losses, and the performance of the least squares estimator is highly sensitive to the choice of the constant $c$, the a priori bound on $\|\nabla f_*\|_{\infty}$. We believe that a key factor in the performance of our estimator is the regularization penalty introduced in the DRCR formulation. 

\subsection{Real dataset}

We consider a public dataset from United States Environmental Protection Agency, which was suggested by \cite{mazumder2019computational}. The dataset consists of 600 air market data of California in the first quarter of 2019. The response was the amount of heat input with the covariates corresponding to the amounts of emissions of SO2, NOx, CO2 (in tons) and the NOX rate. Empirical evidence suggests that relationship between the response and the log transformation of each individual covariate can be modeled well by a convex fit, so we do the log transformation on covariates and then standardize the data. Since we never know $f^*$ in real data, we can not evaluate our method in the same way as the submitted paper. Instead, we randomly split the dataset into a training set with 400 data and a test set with 200 data, and we implement three different approaches: DRCR, LSE and LR (linear regression). We repeat the experiment 10 times and then compare the average training $l_1$ loss and average test $l_1$ error. 
\begin{center}
	\resizebox{0.39\textwidth}{!}{
	\begin{tabular}{ccc}\\\toprule  
		Method & Training loss & Test error \\\midrule
		DRCR & $\mathbf{0.1238}$ & $\mathbf{0.1294}$ \\  \midrule
		LSE & 0.1485 & 0.1516 \\ \midrule
		LR & 0.1691  & 0.1692 \\ \bottomrule
	\end{tabular}
}
\end{center}
We summarize the results in the above table. It is clear that our method outperforms both LSE and LR.

\section{Acknowledgements}
We acknowledge support from NSF grants 1915967, 1820942 and 1838576.


\bibliographystyle{plain}
\bibliography{di}

\appendix
\setcounter{secnumdepth}{0}

\clearpage

\section{Appendix A. Proof of Theorem \protect\ref{thm.rate}.}

\newcommand{\CP}{{\overline{\PP}_n}}
\newcommand{\CE}{{\overline{\EE}_n}}

In this section we present the full proof of Theorem \ref{thm.rate}. To begin, we introduce the following lemmas. Their proofs are deferred to Appendix B.

\begin{lemma} \label{lemma.support}
Under Assumption \ref{aspn.momentP},
\begin{equation*}
\PP \left(\sup_{1\leq i\leq n} \|X_i\|_{\infty} < \frac{1}{2}\left(\log n\right)^{\frac{3}{\gamma}} \right)
\rightarrow 1,
\end{equation*}
as $n \rightarrow \infty$.
\end{lemma}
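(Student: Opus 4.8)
The plan is a straightforward union bound combined with a Chernoff-type tail estimate coming from Assumption \ref{aspn.momentP}. First I would set $t_n := \frac{1}{2}(\log n)^{3/\gamma}$ and, passing to complements, reduce the claim to showing $n\,\PP(\|X\|_\infty \geq t_n) \to 0$; indeed, by the union bound over the sample,
\[
\PP\!\left(\sup_{1\leq i\leq n}\|X_i\|_\infty \geq t_n\right) \leq \sum_{i=1}^n \PP(\|X_i\|_\infty \geq t_n) = n\,\PP(\|X\|_\infty \geq t_n),
\]
which only uses that the $X_i$ are identically distributed, not independence.

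Next, to control the single-sample tail, I would apply Markov's inequality to the nonnegative random variable $\exp(\alpha\|X\|_\infty^\gamma)$, whose expectation $C := \EE\exp(\alpha\|X\|_\infty^\gamma)$ is finite by Assumption \ref{aspn.momentP}. This gives
\[
\PP(\|X\|_\infty \geq t_n) = \PP\!\left(\exp(\alpha\|X\|_\infty^\gamma) \geq \exp(\alpha t_n^\gamma)\right) \leq C\exp(-\alpha t_n^\gamma).
\]
With the chosen $t_n$ we have $t_n^\gamma = 2^{-\gamma}(\log n)^3$, so
\[
n\,\PP(\|X\|_\infty \geq t_n) \leq C\exp\!\left(\log n - \alpha 2^{-\gamma}(\log n)^3\right),
\]
and since the cubic term in $\log n$ dominates the linear one, the right-hand side tends to $0$ as $n\to\infty$, which proves the lemma.

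There is no substantive obstacle here; the only care needed is the bookkeeping of exponents. I would remark that the exponent $3/\gamma$ is not essential for this particular lemma — any exponent strictly larger than $1/\gamma$ makes the bound go through, since then $t_n^\gamma$ grows strictly faster than $\log n$ — but the value $3/\gamma$ is the one chosen for use downstream, where $\frac12(\log n)^{3/\gamma}$ plays the role of the effective radius of the domain on which the covering-number and empirical-process estimates behind Theorem \ref{thm.rate} are carried out.
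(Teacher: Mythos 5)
Your proof is correct and follows essentially the same route as the paper: a union bound over the sample combined with Markov's inequality applied to $\exp(\alpha\|X\|_{\infty}^{\gamma})$, so that $n\,\PP(\|X\|_{\infty}\geq \tfrac12(\log n)^{3/\gamma})\leq C n\exp(-\alpha 2^{-\gamma}(\log n)^{3})\to 0$. The only cosmetic difference is that the paper first writes the probability as a product using independence and then applies the elementary bound $\prod_i(1-p)\geq 1-np$, which is the same union-bound estimate you use directly; your observation that independence is not actually needed is a fair (minor) remark.
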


In the arguments below, we define $\CP$ to be the conditional probability $\PP(\cdot|X_1,\cdots,X_n)$, and $\CE$ to be the conditional expectation $\EE(\cdot|X_1,\cdots,X_n)$. 
\begin{lemma} \label{lemma.inf_norm}
If
\begin{align*}
    \Gamma_0  =&
    \left\lbrace
    \widehat{g}_{n,\delta_n}(X_i)
        > \sup_{1\leq i\leq n}|f_*(X_i)| + 1, \,\,
        \forall i \in [n]
    \right\rbrace \\
    &\cup
    \left\lbrace
    \widehat{g}_{n,\delta_n}(X_i)
        < - \sup_{1\leq i\leq n}|f_*(X_i)| - 1, \,\,
        \forall i \in [n]
    \right\rbrace
\end{align*}
then
    \begin{equation*}
        \PP (
        \Gamma_0
        )
        \leq 2 e^{-2n(\frac{1}{2} - p)^2},
    \end{equation*}
where $p:=\PP(\calE_i\geq 1)$. 
\end{lemma}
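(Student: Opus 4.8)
The plan is to use the first-order optimality of $\widehat g := \widehat g_{n,\delta_n}$ against vertical translations to pin down the sign pattern of its residuals, and then to recognize $\Gamma_0$ as a large-deviations event for a sum of \iid indicators of the $\calE_i$.

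First, I would recall that by Theorem \ref{thm.strongdual}, applied with the absolute error loss $l(y,z)=|y-z|$ (so $L=1$), $\widehat g$ minimizes over $\calF_n$ the functional $J(f)=\delta_n\|\nabla f\|_\infty+\frac1n\sum_{i=1}^n|Y_i-f(X_i)|$. For any $t\in\RR$ the translate $g_t:=\widehat g+t$ is still convex with $\nabla g_t=\nabla\widehat g$, so $g_t\in\calF_n$ and $J(g_t)=\delta_n\|\nabla\widehat g\|_\infty+\Phi(t)$, where $\Phi(t):=\frac1n\sum_{i=1}^n|b_i-t|$ and $b_i:=Y_i-\widehat g(X_i)$. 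Hence $t=0$ minimizes the convex function $\Phi$, so $0\in\partial\Phi(0)$; this yields a selection $s_i\in[-1,1]$ with $s_i=\mathrm{sign}(b_i)$ whenever $b_i\neq0$ and $\sum_{i=1}^n s_i=0$.

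Next I would set $M:=\max_{1\le i\le n}|f_*(X_i)|$ and split $\Gamma_0=\Gamma_0^+\cup\Gamma_0^-$ into its two halves. On $\Gamma_0^+$, for each index $i$ with $\calE_i\le1$ we have $b_i=f_*(X_i)+\calE_i-\widehat g(X_i)\le M+1-\widehat g(X_i)<0$, which forces $s_i=-1$; combined with $s_i\ge-1$ for the remaining indices, $0=\sum_i s_i\le-\#\{i:\calE_i\le1\}+\#\{i:\calE_i>1\}$, \ie $\#\{i:\calE_i>1\}\ge n/2$, so $\Gamma_0^+\subseteq\{\#\{i:\calE_i>1\}\ge n/2\}$. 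Since $\calE$ is symmetric with a positive density near $0$, $q:=\PP(\calE_i>1)\le\PP(\calE_i\ge1)=p<\tfrac12$, so Hoeffding's inequality gives $\PP(\#\{i:\calE_i>1\}\ge n/2)\le e^{-2n(1/2-q)^2}\le e^{-2n(1/2-p)^2}$. The event $\Gamma_0^-$ is the mirror image: there $b_i>0$ for every $i$ with $\calE_i\ge-1$, forcing $s_i=+1$ and hence $\#\{i:\calE_i<-1\}\ge n/2$, while $\PP(\calE_i<-1)=\PP(\calE_i>1)\le p$ by symmetry of $\calE$, so the same tail bound holds. A union bound over $\Gamma_0^+$ and $\Gamma_0^-$ then gives $\PP(\Gamma_0)\le2e^{-2n(1/2-p)^2}$.

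The step I expect to require the most care is the first-order optimality argument: one must verify that vertical translates stay feasible in $\calF_n$ so that the gradient penalty is constant along this family and $t=0$ genuinely minimizes $\Phi$, and then read off the correct one-sided inequality on the residual signs from $0\in\partial\Phi(0)$. Everything after that is a routine Hoeffding estimate, with the symmetry of $\calE$ handling $\Gamma_0^-$ at no extra cost.
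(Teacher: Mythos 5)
Your proof is correct and follows essentially the same route as the paper: both exploit that vertical translates leave the gradient penalty (and membership in $\calF_n$) unchanged, so optimality of $\widehat{g}_{n,\delta_n}$ forces a zero-median property of the residuals $Y_i-\widehat{g}_{n,\delta_n}(X_i)$, which on each half of $\Gamma_0$ forces at least $n/2$ of the $\calE_i$ to exceed $1$ in the appropriate direction, and the bound then follows from Hoeffding's inequality, symmetry of $\calE$, and a union bound. Your subgradient formulation of the median step ($0\in\partial\Phi(0)$) is just a more explicit rendering of the paper's ``otherwise shift by a constant to decrease the objective'' argument, and your use of $\PP(\calE_i>1)\le p$ in place of $\PP(\calE_i\ge 1)=p$ is an immaterial variant.
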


Now we define the set of interest
\begin{equation*}
    \mathcal{L}_n :=
    \left\lbrace 
    f:
    \text{$f$ is convex},
     \|\nabla f \|_{\infty} \leq \log n,
     \| f \|_\infty
    \leq 1 + \sup_{\|x\|_{\infty}\leq (\log n)^{\frac{3}{\gamma}}}|f_*(x) | +
    (\log n)^{1+\frac{3}{\gamma}}
    \right\rbrace.
\end{equation*}

By Lemma \ref{lemma.support} and Lemma \ref{lemma.inf_norm}, we see that
\begin{equation}\label{eq.calL}
    \PP \left(\widehat{g}_{n,\delta_n} \in \calL_n\right) \rightarrow 1.
\end{equation}
For each function $f\in\calL_n$, denoted by
\begin{equation*}
    Z_n(f) =  \frac{1}{n}\sum_{i=1}^n \CE\left( \left | f_*(X_i) - f(X_i) + \calE_i  \right | -  \left| \calE_i  \right | \right),
\end{equation*}
and
\begin{equation*}
    Y_n(f) = \frac{1}{n}\sum_{i=1}^n \left( \left | f_*(X_i) - f(X_i) + \calE_i  \right | -  \left| \calE_i  \right | \right)
    - Z_n(f).
\end{equation*}
We need two basic properties of $Z_n(f)$ and $Y_n(f)$. The proofs can be found in Appendix B.

\begin{lemma}\label{lemma.subGuassian} For any functions $f, g\in\calL_n$ and all $t\geq 0$, 
\begin{align*}
 &\CP\left(Y_n(f)-Y_n(g)
 \geq t\right)\vee\CP\left(Y_n(f)-Y_n(g)\leq -t\right)\\ \leq\quad& \exp\left(-\frac{cnt^2}{\frac{1}{n}\sum_{i=1}^n|f(X_i)-g(X_i)|^2\wedge(16\sigma^2)}\right).
\end{align*}
Where $\sigma$ is the sub-Gaussian parameter of $\calE$, and c is some numerical constant (independent of $f, g$ and $n$).
\end{lemma}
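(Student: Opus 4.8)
The plan is to rewrite $Y_n(f)-Y_n(g)$ as an average of conditionally independent, conditionally centered summands, bound the conditional moment generating function of each summand in two complementary ways, and conclude by a Chernoff bound.

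First I would set
\begin{equation*}
W_i := \bigl|f_*(X_i)-f(X_i)+\calE_i\bigr| - \bigl|f_*(X_i)-g(X_i)+\calE_i\bigr|, \qquad \Delta_i := |f(X_i)-g(X_i)|.
\end{equation*}
The $|\calE_i|$ contributions cancel, so $Z_n(f)-Z_n(g)=\tfrac1n\sum_{i=1}^n\CE W_i$ and hence $Y_n(f)-Y_n(g)=\tfrac1n\sum_{i=1}^n(W_i-\CE W_i)$. Since the $\calE_i$ are i.i.d.\ and independent of $(X_1,\dots,X_n)$, conditionally on $(X_1,\dots,X_n)$ the $W_i$ are independent and each $W_i$ is a fixed function of $\calE_i$; it thus suffices to control $\CE\,e^{s(W_i-\CE W_i)}$ for each $i$ and multiply.

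For each $i$ I would prove two sub-Gaussian estimates. (a) By the reverse triangle inequality $|W_i|\le\Delta_i$, so $W_i-\CE W_i$ lies in an interval of length at most $2\Delta_i$ and Hoeffding's lemma gives $\CE\,e^{s(W_i-\CE W_i)}\le \exp(s^2\Delta_i^2/2)$ for every $s\in\RR$. (b) The function $t\mapsto \bigl|f_*(X_i)-f(X_i)+t\bigr|-\bigl|f_*(X_i)-g(X_i)+t\bigr|$ is $2$-Lipschitz (its a.e.\ derivative is a difference of two signs, hence in $[-2,2]$), so $W_i$ differs from the constant $|f_*(X_i)-f(X_i)|-|f_*(X_i)-g(X_i)|$ by at most $2|\calE_i|$; since under Assumption~\ref{aspn.epsilon} $\calE_i$ is $\sigma$-sub-Gaussian, $W_i-\CE W_i$ is dominated up to an additive constant by a sub-Gaussian variable, hence is itself sub-Gaussian with parameter $O(\sigma)$, giving $\CE\,e^{s(W_i-\CE W_i)}\le \exp(C_0\sigma^2 s^2)$ for every $s\in\RR$ and a universal $C_0$; here I would invoke the equivalence, up to universal constants, of the Orlicz-$\psi_2$ definition of sub-Gaussianity adopted in the paper with the standard exponential-moment characterizations \cite{vershynin_2012}. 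Combining (a) and (b) coordinatewise, multiplying over $i$ by conditional independence, and optimizing $s$ in the Chernoff bound then yields
\begin{equation*}
\CP\bigl(Y_n(f)-Y_n(g)\ge t\bigr)\le \exp\!\left(-\frac{cnt^2}{\bigl(\tfrac1n\sum_{i=1}^n\Delta_i^2\bigr)\wedge(16\sigma^2)}\right)
\end{equation*}
for a universal constant $c$; the lower tail follows by applying the same argument with $s$ replaced by $-s$ (equivalently, swapping $f$ and $g$). The displayed constant $16$ and the universal $c$ are obtained by straightforward bookkeeping of the constants in (a) and (b).

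I expect the main difficulty to be estimate (b): one must pass carefully from the Orlicz-norm definition of $\sigma$-sub-Gaussianity used in the paper to an MGF bound valid for all $s\in\RR$ (so that the resulting tail bound is genuinely Gaussian for all $t\ge 0$, not merely for small $t$), track the effect of centering, and verify that the deterministic $2$-Lipschitz domination combines with the sub-Gaussianity of $\calE_i$ without introducing $n$-dependent multiplicative factors. The remaining ingredients --- the algebraic reduction, the reverse-triangle bound, Hoeffding's lemma, and the Chernoff optimization --- are routine.
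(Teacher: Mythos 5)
Your proposal is correct and follows essentially the same route as the paper's proof: the same centered decomposition of $Y_n(f)-Y_n(g)$ into conditionally independent summands, the same two complementary per-summand bounds (reverse triangle inequality giving $|f(X_i)-g(X_i)|$, and $2$-Lipschitz dependence on the noise giving an $O(\sigma)$ sub-Gaussian bound after centering), and the same sub-Gaussian concentration conclusion. The only difference is presentational --- the paper tracks Orlicz $\psi_2$ norms ($\leq |f(X_i)-g(X_i)|\wedge 4\sigma$) and then invokes the standard Orlicz-to-MGF implication, whereas you work directly with conditional MGF bounds via Hoeffding's lemma and the Chernoff argument, which is equivalent up to universal constants.
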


\begin{lemma}
	\label{lemma:Zn_bound}
	There exists a constant $c_0 > 0$, such that for each $f$ with $\el(f, f_*) > \sigma_n$, we have that
	\begin{equation*}
	Z_n(f) \geq c_0 \sigma_n^2.
	\end{equation*}
\end{lemma}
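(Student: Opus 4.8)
The plan is to reduce the statement to a one–dimensional convexity estimate on the noise. Since $\calE_i$ is independent of $X_i$, conditioning on $X_1,\dots ,X_n$ gives
$Z_n(f)=\frac1n\sum_{i=1}^n\phi\big(f_*(X_i)-f(X_i)\big)$, where $\phi(a):=\EE[\,|a+\calE|-|\calE|\,]$ with $\calE$ distributed as $\calE_1$; this is finite because $\calE$ is sub-Gaussian, hence integrable. So it suffices to (i) show that $\phi$ grows at least like $\min(a^2,|a|)$, and (ii) convert a lower bound on $\frac1n\sum_i|a_i|$ into one on $\frac1n\sum_i\min(a_i^2,|a_i|)$, where $a_i:=f_*(X_i)-f(X_i)$. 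Note that this argument needs no boundedness of $f$ and no compactness of the domain, so the lemma is used exactly as stated, for arbitrary $f$ with $\el(f,f_*)>\sigma_n$.

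For step (i): $\phi$ is convex, being a pointwise integral of the convex maps $a\mapsto|a+e|$, and even, since $\calE\eqdistr-\calE$. Its derivative is $\phi'(a)=\EE\,\mathrm{sgn}(a+\calE)=2\,\PP(\calE\le a)-1$, using symmetry and the absence of an atom of $\calE$ at $0$ (both from Assumption \ref{aspn.epsilon}); in particular $\phi'(0)=0$, so $0$ is the minimizer of $\phi$, and it is the unique one because $\phi$ is strictly convex near $0$. On a neighborhood $[-\eta,\eta]$, with $\eta\le\tfrac12$, on which $p_\calE$ is continuous and positive we have $\phi'(a)=2\int_0^a p_\calE$, hence $\phi(a)=2\int_0^a\!\!\int_0^s p_\calE(t)\,dt\,ds\ge \underline p\,a^2$ for $|a|\le\eta$, where $\underline p:=\min_{[-\eta,\eta]}p_\calE>0$. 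For $a\ge\eta$, monotonicity of $\phi'$ on $[0,\infty)$ gives $\phi'(a)\ge\phi'(\eta)=2\int_0^\eta p_\calE=:c'>0$, so $\phi(a)\ge\phi(\eta)+c'(a-\eta)$ grows at least linearly. Combining the two regimes and using evenness yields a constant $c_1>0$, depending only on the law of $\calE$, with $\phi(a)\ge c_1\min(a^2,|a|)$ for all $a\in\RR$.

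For step (ii): fix $f$ with $\frac1n\sum_i|a_i|=\el(f,f_*)>\sigma_n$ and split the indices into $I=\{i:|a_i|\le 1\}$ and $I^c=\{i:|a_i|>1\}$; at least one of $\frac1n\sum_{i\in I}|a_i|$ and $\frac1n\sum_{i\in I^c}|a_i|$ exceeds $\sigma_n/2$. If the $I^c$-sum does, then $\min(a_i^2,|a_i|)=|a_i|$ on $I^c$, so $\frac1n\sum_i\min(a_i^2,|a_i|)\ge\frac1n\sum_{i\in I^c}|a_i|>\sigma_n/2\ge\sigma_n^2/2$ as soon as $\sigma_n\le 1$. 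If instead the $I$-sum does, then $\min(a_i^2,|a_i|)=a_i^2$ on $I$ and Cauchy--Schwarz gives $\frac1n\sum_{i\in I}a_i^2\ge\big(\frac1n\sum_{i\in I}|a_i|\big)^2>\sigma_n^2/4$. In either case $\frac1n\sum_i\min(a_i^2,|a_i|)>\sigma_n^2/4$, whence $Z_n(f)\ge(c_1/4)\,\sigma_n^2$ for all $n$ large enough that $\sigma_n\le 1$ (which holds since $\sigma_n\to 0$), and $c_0:=c_1/4$ works.

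I expect step (i) to be the only substantive point: it is where Assumption \ref{aspn.epsilon} genuinely enters — the symmetry of $\calE$ forcing $0$ to be the minimizer of $\phi$, and the continuous positive density near $0$ bounding the curvature of $\phi$ there from below. Step (ii) is elementary; its one subtlety is that $\phi$ is only linear, not quadratic, for large arguments, which is precisely why the index split is needed instead of a single application of Cauchy--Schwarz.
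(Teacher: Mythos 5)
Your proof is correct, and it reaches the conclusion by a genuinely different aggregation argument than the paper. Both proofs start from the same reduction: conditioning on the $X_i$'s and writing $Z_n(f)=\frac1n\sum_i T(f_*(X_i)-f(X_i))$ with $T(a)=\EE[|a+\calE|-|\calE|]$ (your $\phi$), and both extract from Assumption 2 that $T$ is even, convex, minimized at $0$, with curvature bounded below by the density of $\calE$ near $0$. The difference is in how the empirical average is handled. The paper uses Jensen's inequality (convexity of $T$) to pull the average inside, then monotonicity to reduce to the single evaluation $T(\sigma_n)$, and finally a local Taylor bound $T(x)\ge \frac12 p_{\calE}(0)x^2$ valid for small $x$; since $\sigma_n\to 0$, only the behavior of $T$ near the origin is ever needed. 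You instead prove the global pointwise bound $T(a)\ge c_1\min(a^2,|a|)$ (quadratic near $0$, linear at infinity, the latter coming for free from convexity and symmetry) and then control the empirical sum directly by splitting indices at $|a_i|=1$ and applying Cauchy--Schwarz on the small-deviation block. Your route is somewhat longer and needs the extra linear-growth regime, but it yields the stronger intermediate Huber-type estimate $Z_n(f)\gtrsim \frac1n\sum_i\min\bigl(|f(X_i)-f_*(X_i)|^2,\,|f(X_i)-f_*(X_i)|\bigr)$ without passing through the aggregate point, whereas the paper's Jensen argument is shorter and more economical. Both arguments implicitly require $n$ large (you need $\sigma_n\le 1$, the paper needs $\sigma_n$ inside the neighborhood where the density is positive and continuous), which is consistent with how the lemma is invoked; and the minor points you gloss over (differentiating under the expectation, possible atoms of $\calE$ away from the origin, the explicit constant when merging the two regimes of the bound on $T$) are routine and can be handled with one-sided derivatives and convexity, so there is no gap.
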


By the definition of $\widehat{g}_{n,\delta_n}$, we have
\begin{equation*}
 \delta_n\|\nabla \widehat{g}_{n,\delta_n} \|_{\infty} + \frac{1}{n}\sum_{i=1}^n l(Y_i, \widehat{g}_{n,\delta_n}(X_i))
 \leq
 \delta_n\|\nabla f_*\|_{\infty} + \frac{1}{n}\sum_{i=1}^n l(Y_i, f_*(X_i)),
\end{equation*}
which implies
\begin{equation*}
Y_n\left(\widehat{g}_{n,\delta_n}\right) + Z_n\left(\widehat{g}_{n,\delta_n}\right)
+
\delta_n (\|\nabla \widehat{g}_{n,\delta_n} \|_{\infty} - \|\nabla f_*\|_{\infty}  )
\leq 0.
\end{equation*}
Together with \eqref{eq.calL}, it suffices to show that 
\begin{equation}\label{eq.target_bound}
    \CP \left(
    \inf_{ f\in\calL: \el(f, f_*) > \sigma_n }
    Y_n\left(f\right) + Z_n\left(f\right)
+
\delta_n (\|\nabla f\|_{\infty} - \|\nabla f_*\|_{\infty})
\leq 0
\right)
\rightarrow 0, \quad \textrm{as } n \rightarrow \infty,
\end{equation}
where $\sigma_n$ is chosen as
\begin{equation}\label{eq.sigma_n}
    \sigma_n = \frac{\sqrt{2 \delta_n \left(\|\nabla f_* \|_{\infty}\vee 1\right)}}{c_0},
\end{equation}
and $\delta_n$ to be determined later. Given the choice of $\sigma_n$, we may assume $\|\nabla f_*\|_{\infty}\geq 1$ in the rest of the proof. To carefully bound \eqref{eq.target_bound}, we apply the following covering lemma.
\begin{lemma}[\cite{covering}]\label{lemma.covering}
Let $\calC([a,b]^d,B,L)$ denotes the class of real-valued convex functions defined on $[a,b]^d$ that are uniformly bounded in absolute value by $B$ and uniformly Lipschitz with constant $L$, then 
\begin{equation*}
M\left(\calC([a,b]^d,B,L),\varepsilon;\rho\right)\leq \exp\left(c_1\left(\frac{\varepsilon}{B+L(b-a)}\right)^{-d/2}\right),
\end{equation*}
where $c_1$ is a constant independent of $a,b,B,L$ and $\varepsilon$. 
\end{lemma}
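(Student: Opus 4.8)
The plan is to reduce the statement, by an affine rescaling, to a single universal convex-function class, then to encode convex functions by convex bodies in one extra dimension and invoke Bronshtein's classical metric-entropy estimate for convex bodies; this is in essence the argument of the cited reference. Throughout I take $\rho$ to be the uniform metric $\rho(f,g)=\sup_{x\in[a,b]^d}|f(x)-g(x)|$; since every empirical (or $L_p$) metric on $[a,b]^d$ is dominated by $\rho$, the bound for $\rho$ immediately yields it for those metrics.

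\emph{Rescaling.} Write $S:=B+L(b-a)$. For $f\in\calC([a,b]^d,B,L)$ set $\tilde f(u):=S^{-1}f\big(a\mathbf 1+(b-a)u\big)$ for $u\in[0,1]^d$. Then $\tilde f$ is convex on $[0,1]^d$ with $\|\tilde f\|_\infty\le B/S\le 1$ and Lipschitz constant $L(b-a)/S\le 1$, so $f\mapsto\tilde f$ injects $\calC([a,b]^d,B,L)$ into the universal class $\calC_0:=\calC([0,1]^d,1,1)$ and multiplies the metric by exactly $S$. Hence $M(\calC([a,b]^d,B,L),\varepsilon;\rho)\le M(\calC_0,\varepsilon/S;\rho)$, and it remains to prove $M(\calC_0,\eta;\rho)\le\exp(c_1\eta^{-d/2})$ for a universal $c_1$; for $\eta\ge 1$ this is trivial, so assume $\eta<1$.

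\emph{Convex bodies and Bronshtein.} To each $f\in\calC_0$ associate its truncated epigraph $K_f:=\{(u,t):u\in[0,1]^d,\ f(u)\le t\le 2\}$, a convex body inside the fixed box $Q:=[0,1]^d\times[-1,2]\subset\RR^{d+1}$ (nonempty interior since $f\le 1<2$). The lower boundary of $K_f$ is the graph of $f$, and because every $f\in\calC_0$ is $1$-Lipschitz the quantities $\rho(f,g)$ and the Hausdorff distance $d_H(K_f,K_g)$ are comparable: on one hand $d_H(K_f,K_g)\le\rho(f,g)$, since for $(u,t)\in K_f$ either $t\ge g(u)$ and $(u,t)\in K_g$, or $t<g(u)$ and $(u,g(u))\in K_g$ with $|t-g(u)|\le g(u)-f(u)\le\rho(f,g)$ (and symmetrically for points of $K_g$); on the other hand, if $d_H(K_f,K_g)\le\eta'$ then the nearest point of $K_g$ to $(u,f(u))$ has the form $(u',t')$ with $|u-u'|\le\eta'$, $|f(u)-t'|\le\eta'$, $t'\ge g(u')\ge g(u)-\eta'$, whence $g(u)\le f(u)+2\eta'$, and symmetrically $f(u)\le g(u)+2\eta'$, so $\rho(f,g)\le 2\eta'$. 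Now invoke Bronshtein's theorem: the family of all convex bodies contained in a fixed ball of $\RR^{m}$ has $\varepsilon$-covering number under $d_H$ at most $\exp(c\,\varepsilon^{-(m-1)/2})$ for $\varepsilon$ below the ball's diameter, with $c$ depending only on $m$ and the radius. Applying this with $m=d+1$ to the sub-family $\{K_f:f\in\calC_0\}$ of convex bodies in $Q$ (passing to an internal net costs only a factor $2$ in $\varepsilon$) gives an $(\eta/2)$-net of $\{K_f:f\in\calC_0\}$ in $d_H$ of cardinality at most $\exp(c_1\eta^{-d/2})$; pulling each net element back through the graph correspondence and applying the comparison above produces an $\eta$-net of $\calC_0$ in $\rho$. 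This yields $M(\calC_0,\eta;\rho)\le\exp(c_1\eta^{-d/2})$, and combined with the rescaling step completes the proof.

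\emph{Main obstacle.} Everything except the middle step is bookkeeping (an affine change of variables, or a black-box citation of Bronshtein's convex-body entropy bound). The one genuinely delicate point is the metric comparison: one must verify that Hausdorff closeness of the truncated epigraphs is equivalent, up to a universal constant, to uniform closeness of the functions, and that the truncation at $t=2$ introduces no artefacts — this is exactly where the $1$-Lipschitz normalization is used. An alternative that avoids the convex-body machinery is to partition $[0,1]^d$ into $\Theta(\eta^{-1/2})^d$ subcubes and approximate $f$ on each by a tangent affine map, but that route then needs Bronshtein's subtle counting argument showing the approximation error can be $\Omega(1)$ on at most a controlled number of subcubes because the total variation of $\nabla f$ over $[0,1]^d$ is bounded; the convex-body reduction packages this difficulty into a clean reference.
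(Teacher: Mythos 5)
Your proposal cannot be compared against a proof in the paper, because the paper gives none: Lemma \ref{lemma.covering} is imported verbatim from the cited reference \cite{covering} and used as a black box. What you have written is, in substance, the classical argument behind that citation (Bronshtein's entropy bound), and I find it correct as a sketch: the affine rescaling to the normalized class $\calC([0,1]^d,1,1)$ correctly multiplies the sup metric by $B+L(b-a)$; the truncated-epigraph correspondence $f\mapsto K_f$ is well defined because $\|f\|_\infty\le 1<2$, and your two-sided comparison $d_{\mathrm{H}}(K_f,K_g)\le \rho(f,g)\le 2\,d_{\mathrm{H}}(K_f,K_g)$ (the second inequality using the Lipschitz normalization) is sound; the internal-net step costing a factor $2$ in $\varepsilon$ is standard; and Bronshtein's theorem for convex bodies in a fixed ball of $\RR^{d+1}$ gives exactly the exponent $\varepsilon^{-d/2}$. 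Two minor bookkeeping points, neither a gap: the Lipschitz condition in the paper is phrased via $\|\nabla f\|_\infty$, so the Lipschitz constant is with respect to the $\ell_1$ norm while the Hausdorff distance is Euclidean, which changes only dimension-dependent constants (and $c_1$ is allowed to depend on $d$); and the metric $\rho$ in the lemma should be read as the sup metric on the cube, which dominates the empirical metric $\rho_n$ actually used in the rate proof, as you note. The one genuinely deep ingredient, the convex-body entropy estimate, is invoked rather than proved, but since the paper itself treats the whole lemma as a citation, that is an acceptable level of rigor here.
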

Denote by $\rho_n$ the metric such that $$\rho_n(f,g) := \sup_{\|x\|_{\infty} \leq (\log n)^{\frac{3}{\gamma}}} |f(x) - g(x)|.$$ By Lemma \ref{lemma.covering}, together with the fact that $\sup_{\|x\|_{\infty}\leq (\log n)^{3/\gamma}}\|f_*\|$ is of order $\|\nabla f_{*}\|_{\infty}(\log n)^{\frac{3}{\gamma}}$, we have for $n$ large enough, given any $\varepsilon > 0$, there exists an $\varepsilon$-covering $A_\epsilon$ of the set $\mathcal{L}_n$ under metric $\rho_n$, such that
\begin{align*}
    |A_\varepsilon|
   & \leq 
    \exp\left(c_1\left( \frac{\varepsilon}{1+\sup|f_*(x)\mathbbm{1}(\|x\|_{\infty}\leq (\log n)^{\frac{3}{\gamma}})|
    + 3 (\log n)^{1+3/\gamma} } \right)^{-\frac{d}{2}}\right)\\
&\leq \exp\left(c_1\left( \frac{\varepsilon}{4(\log n)^{1+3/\gamma} } \right)^{-\frac{d}{2}}\right).
\end{align*}
holds for $n$ is sufficiently large. For each $j \geq 0$, define
\begin{equation}
\label{eq-def-epsilonj}
    \varepsilon_j = 2^{-j} \varepsilon_0.
\end{equation}
where $\varepsilon_0>0$ to be determined later. 
For any $N\geq 1$, we have the following decomposition
\begin{equation*}
Y_n(f)= Y_n(f_0)+ \sum_{i=0}^{N-1}\left(Y_n(f_{i+1})-Y_n(f_i)\right)+\left(Y_n(f) - Y_n(f_N)\right)
\end{equation*}
holds for all $f_i\in A_{\varepsilon_i}$ ($0\leq i\leq N$). In particular, we can choose $f_{i+1}\in A_{\varepsilon_{i+1}}$ such that $\rho(f_{i+1},f) <\varepsilon_{i+1}$ for all $i\geq 1$. By the choice of $\sigma_n$ in \eqref{eq.sigma_n}, together with Lemma \ref{lemma:Zn_bound} as well as the union bound, we conclude that
\begin{align*}
&\CP \left(
\inf_{ f\in\calL: \el(f, f_*) > \sigma_n }
Y_n\left(f\right) + Z_n\left(f\right)
+
\delta_n (\|\nabla f\|_{\infty} - \|\nabla f_*\|_{\infty})
\leq 0
\right)\\
\leq \quad &
\CP \left(
\inf_{ f \in\calL : \el(f,f_*)>\sigma_n }
Y_n(f)+
\delta_n \|\nabla f_* \|_{\infty} 
\leq 0
\right)\\
\leq \quad & \sum_{f_0 \in A_{\epsilon_0}}
\CP \left(
Y_n(f_0)\leq - \frac{\delta_n \|\nabla f_* \|_{\infty} }{3}
\right) \notag \\
&+\sum_{j = 0}^{N-1} \sum_{\mbox{$\begin{subarray}{c} f_j \in A_{\varepsilon_j}, f_{j+1} \in A_{\varepsilon_{j+1}},\\
		\rho(f_j, f_{j+1}) < 2\varepsilon_j \end{subarray}$}}
\CP \left(
Y_n(f_{j+1}) - Y_n(f_{j})
\leq -t_j
\right) \\
& + \sum_{f_N \in A_{\varepsilon_N}}
\CP \left(
\inf_{f: \rho(f, f_N) < \varepsilon_N} Y_n(f) - Y_n(f_N)
\leq - \frac{\delta_n \|\nabla f_* \|_{\infty} }{3}
\right)\\
:= \quad &I_1 + I_2 + I_3. \numberthis \label{eq-pi-decompose}
\end{align*}
where $t_j > 0$ will be chosen later so that
\begin{equation}
\label{eq.cond_t}
    \sum_{j=0}^{{N}-1} t_j \leq \frac{\delta_n \|\nabla f_* \|_{\infty} }{3}.
\end{equation}
Next we show that \eqref{eq-pi-decompose} goes to zero. Let us begin with a proper choice of $\varepsilon_0$, ${N}$, $t_j (0\leq j\leq {N}-1)$ and $\delta_n$. 
 Let $\varepsilon_0$ satisfy
\begin{equation*}
c_1\left( \frac{\varepsilon_0}{4  (\log n)^{1+\frac{3}{\gamma}}
} \right)^{-\frac{d}{2}}
=  \frac{ c n \left(\delta_n \|\nabla f_* \|_{\infty}
	\right)^2}
{288 \sigma^2},
\end{equation*}
so that,
\begin{equation*}
\label{eq-def-epsilon0}
\varepsilon_0
= 4 \left(\frac{288c_1 \sigma^2}{c \|\nabla f_* \|_{\infty}^2} \right)^{\frac{2}{d}}  (\log n)^{1+\frac{3}{\gamma}} \delta_n^{-\frac{4}{d}} n^{-\frac{2}{d}}.
\end{equation*}
Furthermore, we define $t_j$ so that 
\begin{equation*}
\label{eq-bound-I2-2}
2c_1\left( \frac{\varepsilon_{j+1}}{4 (\log n)^{1+\frac{3}{\gamma}} } \right)^{-\frac{d}{2}}
= \frac{ c n t_j^2}
{8 \varepsilon_j^2},
\end{equation*}
that is,
\begin{equation*}
t_j = 4c_1^{\frac{1}{2}} c^{-\frac{1}{2}}
8^{\frac{d}{4}} \varepsilon_j^{1- \frac{d}{4}} (\log n)^{\frac{d}{4}(1+\frac{3}{\gamma})} n^{-\frac{1}{2}}.
\end{equation*}
Finally, we set 
\begin{equation*}
\label{eq-def-delta}
\delta_n = 96\left(\frac{c_1}{c}\right)^{\frac{2}{d}}
\left(\log n\right)^{1+\frac{3}{\gamma}}n^{-\frac{2}{d}}.
\end{equation*}
and define 
\begin{equation*}
N_n = \inf\left\lbrace N\geq 0: \varepsilon_0 2^{-N} < \frac{\delta_n \|\nabla f_* \|_{\infty} }{6} \right\rbrace.
\end{equation*}
Now we are able to bound $I_1, I_2$ and $I_3$ in \eqref{eq-pi-decompose} accordingly. 

1.\textit{Upper bound for $I_1$}. The choice of $\varepsilon_0$, together with Lemmas  \ref{lemma.subGuassian} and  \ref{lemma.covering}, implies that 
\begin{align*}
I_1
&\leq
\exp\left(c_1\left( \frac{\varepsilon_0}{
	4  (\log n)^{1+\frac{3}{\gamma}} } \right)^{-\frac{d}{2}}
- \frac{c n \left(\delta_n \|\nabla f_* \|_{\infty}
	\right)^2}
{144 \sigma^2}  \right) \\
&=  \exp\left(- \frac{ c n \left(\delta_n \|\nabla f_* \|_{\infty}
	\right)^2}
{288 \sigma^2}\right).\numberthis \label{eq-bound-I1-1}
\end{align*}
2. \textit{Upper bound for $I_3$}. We first check that $N_n>1$ when $n$ sufficiently large. To see this, note that the definition of $N_n$ implies that 
\begin{equation*}\label{eq.con_epsilon}
\varepsilon_0 > \frac{\delta_n \|\nabla f_* \|_{\infty} }{6},
\end{equation*}
that is,
\begin{equation*}
4 \left(\frac{288c_1 \sigma^2}{c \|\nabla f_* \|_{\infty}^2} \right)^{\frac{2}{d}} (\log n)^{1+\frac{3}{\gamma}} \delta_n^{-\frac{4}{d}}n^{-\frac{2}{d}}>\frac{\delta_n\|\nabla f_*\|_{\infty}}{6}
\end{equation*}
which is equivalent to 
\begin{equation*}
\frac{24^{\frac{d}{2}}288c_1\sigma^2}{c\|\nabla f_*\|^{2+\frac{d}2}}n^{\frac{4}{d}}\left(\log n\right)^{\frac{(\gamma+3)d}{2\gamma}-(2+\frac{d}{2})(1+\frac{3}{\gamma})}>1. 
\end{equation*}
The above inequality holds trivially for sufficiently large $n$. Note that for any $f$ such that $\rho(f, f_{N_n}) < \varepsilon_{N_n}$, we have 
\begin{equation*}
|Y_n(f) - Y_n(f_{N_n})|\leq 2\varepsilon_{N_n}< \frac{\delta_n\|\nabla f_*\|_{\infty}}{3}.
\end{equation*}
Hence 
\begin{equation*}
\inf_{f: \rho(f, f_{N_n}) < \varepsilon_{N_n}} Y_n(f) - Y_n(f_{N_n})>-\frac{\delta_n\|\nabla f_*\|_{\infty}}{3}.
\end{equation*}
which simply makes $I_3 = 0$.

\noindent3.\textit{Upper bound for $I_2$}. 
For any $1\leq j\leq {N_n}-1$, the choice of the $f_j$'s implies that 
\begin{equation*}
\frac{1}{n}\sum_{i=1}^n|f_{j}(X_i)-f_{j+1}(X_i)|^2\wedge (4\sigma)^2\leq\frac{1}{n}\sum_{i=1}^n|f_{j}(X_i)-f_{j+1}(X_i)|^2\leq 4\varepsilon_j^2.
\end{equation*}
By the choice of the $t_j$'s, together with Lemmas  \ref{lemma.subGuassian} and  \ref{lemma.covering}, we have
\begin{align*}
I_2
&\leq
\sum_{j=0}^{{N_n}-1}
\exp\left(2c_1\left( \frac{\varepsilon_{j+1}}{4  (\log n)^{1+\frac{3}{\gamma}} } \right)^{-\frac{d}{2}}
- \frac{ c n t_j^2}
{4 \varepsilon_j^2}  \right) \\
&= \sum_{j=1}^{{N_n}-1} \exp\left(-2c_1\left( \frac{\varepsilon_{j+1}}{4 (\log n)^{1+\frac{3}{\gamma}} } \right)^{-\frac{d}{2}}\right)\\
&= \sum_{j=1}^{{N_n}-1} \exp\left(-2c_1\left( \frac{\varepsilon_{0}}{4 (\log n)^{1+\frac{3}{\gamma}} } \right)^{-\frac{d}{2}}2^{\frac{(j+1)d}{2}}\right) \\
&= \sum_{j=1}^{{N_n}-1} \exp\left(-\frac{c n \left(\delta_n \|\nabla f_* \|_{\infty}
	\right)^2}
{144 \sigma^2}2^{\frac{(j+1)d}{2}}\right)\numberthis \label{eq-bound-I2-1} 
\end{align*}
Next, we verify that (\ref{eq.cond_t}) holds. Note that
$
t_j = t_0 2^{(\frac{d}{4}-1)j}
$ for all $0\leq j\leq {N_n}-1$. Hence 
\begin{align*}
\sum_{j=0}^{{N_n}-1} t_j &= t_0 \frac{2^{(\frac{d}{4}-1){N_n}} - 1}{2^{\frac{d}{4}-1} -1}\\
&\leq t_0 \frac{2^{(\frac{d}{4}-1){N_n}} }{2^{\frac{d}{4}-1} -1} \\
&= 4\left(\frac{c_1}{c}\right)^{\frac{1}{2}}
8^{\frac{d}{4}} \left(\varepsilon_02^{-{N_n}}\right)^{1- \frac{d}{4}} (\log n)^{\frac{d}{4}(1+\frac{3}{\gamma})} n^{-\frac{1}{2}}. \numberthis \label{eq.sum_t}
\end{align*}
By definition of ${N_n}$ (note that ${N_n}>1$), we have 
$
\varepsilon_0 2^{-{N_n}} > \frac{1}{12}\delta_n \|\nabla f_* \|_{\infty}
$. By substituting this  into \eqref{eq.sum_t},  it suffices to check that
\begin{align*}
4\left(\frac{c_1}{c}\right)^{\frac{1}{2}}
8^{\frac{d}{4}} \left(\frac{\delta_n \|\nabla f_* \|_{\infty} }{12}\right)^{1- \frac{d}{4}} a_n^{\frac{d}{4}} (\log n)^{\frac{3d}{4\gamma^*}} n^{-\frac{1}{2}} \leq \frac{\delta_n \|\nabla f_* \|_{\infty} }{3},
\end{align*}
which is equivalent to 
\begin{equation}
\delta_n\|\nabla f_*\|_{\infty}\geq 96\left(\frac{c_1}{c}\right)^{\frac{2}{d}}
\left(\log n\right)^{1+\frac{3}{\gamma}}
n^{-\frac{2}{d}}. 
\end{equation}
The above holds because of our  choice of $\delta_n$. (Note that we already assume $\|\nabla f_*\|_{\infty}\geq 1$,  without loss of generality). 

Finally, we bound the sum of $I_1, I_2$ and $I_3$ in \eqref{eq-pi-decompose}. By \eqref{eq-bound-I1-1},  \eqref{eq-bound-I2-1} and the fact that $I_3=0$, we have
\begin{eqnarray}
I_1 + I_2+I_3
    &\leq& 
    \exp\left( - \frac{ c n \left(\delta_n \|\nabla f_* \|_{\infty}
    \right)^2}
    {288 \sigma^2}  \right)
    +
    \sum_{j=0}^{{N_n}-1}
    \exp\left(
    - \frac{ c n \left(\delta_n \|\nabla f_* \|_{\infty}
    \right)^2}
    {144 \sigma^2} 2^{\frac{(j+1)d}{2}}
     \right). \notag \\
     &\leq&
     \sum_{j=0}^{\infty}
    \exp\left(
    - \frac{ c n \left(\delta_n \|\nabla f_* \|_{\infty}
    \right)^2}
    {288 \sigma^2} 2^{\frac{jd}{2}}
     \right).
\end{eqnarray}
Note that for any $t>\log 2$, 
\begin{equation}
    \sum_{j=0}^{\infty}
    \exp\left(
    - t 2^{\frac{jd}{2}}
     \right)
     \leq 
     \sum_{j=0}^{\infty}
    \exp\left(
    -t (j+1)
     \right)
     \leq 2 \exp\left(
        -t
     \right).
\end{equation}
By our choice of $\delta_n$, we have that
\begin{equation}
    \frac{ c n \left(\delta_n \|\nabla f_* \|_{\infty}
    \right)^2}
    {288 \sigma^2}
    = 32 c^{1 - \frac{4}{d}} c_1^{\frac{4}{d}}\sigma^{-2} \|\nabla f_* \|_{\infty}^2 \left(\log n\right)^{2+\frac{6}{\gamma}}n^{1- \frac{4}{d}}.
\end{equation}
Since $d > 4$, when $n$ is large enough, the above term is certainly greater than $\log 2$. Hence, for $\sigma_n = \frac{\sqrt{2 \delta_n \left(\|\nabla f_* \|_{\infty}\vee 1\right)}}{c_0} = \Theta\left(n^{-\frac{1}{d}}(\log n)^{1+\frac{3}{\gamma}}\right)$, and \eqref{eq-pi-decompose} is bounded by 
\begin{align*}
    &\CP \left(
    \inf_{ f \in\calL : \el(f,f_*)>\sigma_n}
    Y_n(f)+
    \delta_n \|\nabla f_* \|_{\infty} 
\leq 0
\right)\\
  \leq\quad &  2 \exp \left(
    - 32 c^{1 - \frac{4}{d}} c_1^{\frac{4}{d}}\sigma^{-2} \|\nabla f_* \|_{\infty}^2 \left(\log n\right)^{2+\frac{6}{\gamma}}n^{1- \frac{4}{d}}
     \right),
\end{align*}
which goes to zero as $n\rightarrow \infty$. 

\qed

\clearpage

\section{Appendix B. Proofs of Lemmas.}

In this section, we prove Lemmas \ref{lemma.support}, \ref{lemma.inf_norm}, \ref{lemma.subGuassian} and \ref{lemma:Zn_bound} accordingly. 
\begin{proof}[Proof of Lemma \ref{lemma.support}]
Since $X_i$'s are i.i.d, Assumption \ref{aspn.momentP} implies that
\begin{align*}
\PP\left(\sup_{1\leq i\leq n}\|X_i\|_{\infty}<\frac{1}{2}\left(\log n\right)^{\frac{3}{\gamma}}\right) &= \prod_{i=1}^n\PP\left(\|X_i\|_{\infty}<\frac{1}{2}\left(\log n\right)^{\frac{3}{\gamma}}\right)\\
&= \prod_{i=1}^n\left[1 - \PP\left(\|X_i\|_{\infty}\geq \frac{1}{2}\left(\log n\right)^{\frac{3}{\gamma}}\right)\right]\\
&\geq 1-n\PP\left(\|X\|_{\infty}\geq \frac{1}{2}\left(\log n\right)^{\frac{3}{\gamma}}\right)\\
&\geq 1-n\exp\left(-\frac{\alpha}{2^{\gamma}}\left(\log n\right)^{3}\right)\EE\exp\left(\alpha\|X\|^{\gamma}_{\infty}\right).
\end{align*}
Then for $n\geq \exp\left(2^{\gamma}/\alpha\right)$ we have 
\begin{align*}
1-n\exp\left(-\frac{\alpha}{2^{\gamma}}\left(\log n\right)^{3}\right)\EE\exp\left(\alpha\|X\|^{\gamma}_{\infty}\right)&\geq 1-n\exp\left(-\left(\log n\right)^{2}\right)\EE\exp\left(\alpha\|X\|^{\gamma}_{\infty}\right)\\
&\geq 1-\frac{n}{n^{\log n}}\EE\exp\left(\alpha\|X\|^{\gamma}_{\infty}\right)\\
&\rightarrow 1, 
\end{align*}
which complete the proof. 
\end{proof}

\begin{proof}[Proof of Lemma \ref{lemma.inf_norm}]
By the definition of $\widehat{g}_{n,\delta_n}(X_i)$ we see that
\begin{equation*}
    \sum_{i=1}^n |Y_i - \widehat{g}_{n,\delta_n}(X_i)|
    = \min_{a \in \RR}\left \{\sum_{i=1}^n |Y_i - \widehat{g}_{n,\delta_n}(X_i)-a| \right \},
\end{equation*} 
which implies
\begin{equation*}
\# \{ i: Y_i \geq \widehat{f}_{n,\delta_n}(X_i)
    \} 
    \geq \frac{n}{2}.
\end{equation*}
Otherwise, we can shift the $\widehat{g}_{n,\delta_n}(X_i)$ by a constant to obtain a smaller objective value, which contradicts the definition of $\widehat{g}_{n,\delta_n}(X_i)$. 
As a result, 
\begin{align*}
\PP \left(
\widehat{g}_{n,\delta_n}(X_i)
> \sup_{1\leq i\leq n}|f_*(X_i)| + 1, \,\,
\forall i \in [n]
\right)
&\leq 
\PP \left (
\# \{
i: Y_i \geq \sup_{1\leq i\leq n}|f_*(X_i)| + 1
\} 
\geq \frac{n}{2}
\right ) \notag \\
&\leq 
\PP \left(
\sum_{i=1}^n
\mathbbm{1}_{\left\lbrace \calE_i \geq 1\right\rbrace}
\geq \frac{n}{2}
\right).
\end{align*}

Since $\calE_i$'s are i.i.d, we have that the $\mathbbm{1}_{\left\lbrace \calE_i \geq 1\right\rbrace}$'s are i.i.d Bernoulli$(p)$. By the symmetry of $\calE$, we see that
\begin{equation*}
    p:=\PP (\calE_i \geq 1) < \frac{1}{2},
\end{equation*}
and hence by the Hoeffding's inequality we have that
\begin{equation*}
    \PP \left (
        \sum_{i=1}^n
         \mathbbm{1}_{\left\lbrace\calE_i \geq 1\right\rbrace }
          \geq \frac{n}{2}
        \right)
    \leq e^{-2n(\frac{1}{2} - p)^2}.
\end{equation*}
Using the same argument, we get the same bound for 
$$\PP \left(\widehat{g}_{n,\delta_n}(X_i)
        < - \sup_{1\leq i\leq n}|f_*(X_i)| - 1, \,\,
        \forall i \in [n]
        \right),$$
which complete the proof.
\end{proof}

\begin{proof}[Proof of Lemma \ref{lemma.subGuassian}]
Define 
\begin{align*}
h_{\calE}(x)&:= |x+\calE|-|\calE| - \EE\left(|x+\calE| - |\calE|\right),\\
l_{\calE}(x)&:= |x+\calE| - |x|.
\end{align*}
Now we rewrite $Y_n(f)- Y_n(g)$ by
\begin{equation}\label{eq.representation_Yn}
Y_n(f)-Y_n(g) = \frac{1}{n}\sum_{i=1}^n \left[h_{\calE_i}\left(f_*(X_i)-f(X_i)\right) - h_{\calE_i}\left(f_*(X_i)-g(X_i)\right)\right]
\end{equation}
Note that the summands in \eqref{eq.representation_Yn} are i.i.d, and $\|Y\|_{\psi_2}\leq M$ implies $\log\EE\exp(t(Y-\EE Y))=O(t^2M^2)$ for all $t\geq 0$. It suffices to show that
\begin{equation*}
\|h_{\calE_i}\left(f_*(X_i)-f(X_i)\right) - h_{\calE_i}\left(f_*(X_i)-g(X_i)\right)\|_{\psi_2} \leq  |f(X_i)-g(X_i)|\wedge 4\sigma.
\end{equation*}
Observe that the absolute value of the random variable $|f_*(X_i)-f(X_i)+\varepsilon_i| - |f_*(X_i)-g(X_i)+\varepsilon_i|$ is bounded by $|f(X_i)-g(X_i)|$, so its Orlicz norm is also bounded by $|f(X_i)-g(X_i)|$, which implies 
\begin{equation*}
\|h_{\varepsilon_i}\left(f_*(X_i)-f(X_i)\right) - h_{\varepsilon_i}\left(f_*(X_i)-g(X_i)\right)\|_{\psi_2}\leq |f(X_i)-g(X_i)|.
\end{equation*}
On the other hand, 
\begin{align*}
h_{\calE_i}\left(f_*(X_i)-f(X_i)\right) - h_{\calE_i}\left(f_*(X_i)-g(X_i)\right) = l_{\calE_i}\left(f_*(X_i)-f(X_i)\right)-l_{\calE_i}\left(f_*(X_i)-g(X_i)\right)&\\
 -\CE\left[l_{\calE_i}\left(f_*(X_i)-f(X_i)\right)-l_{\calE_i}\left(f_*(X_i)-g(X_i)\right)\right].&
\end{align*}
Note that $|l_{\calE_i}\left(f_*(X_i)-f(X_i)\right)-l_{\calE_i}\left(f_*(X_i)-g(X_i)\right)|\leq 2|\calE_i|$ and $\EE|Y-\EE Y|^k\leq 2^k\EE Y^k$ for any random variable $Y$. We therefore have 
\begin{align*}
&\|h_{\calE_i}\left(f_*(X_i)-f(X_i)\right) - h_{\calE_i}\left(f_*(X_i)-g(X_i)\right)\|_{\psi_2} \\
=\quad & \sup_{k\geq 1}k^{-1/2}\left(\CE|h_{\calE_i}\left(f_*(X_i)-f(X_i)\right) - h_{\calE_i}\left(f_*(X_i)-g(X_i)\right)|^k\right)^{1/k}\\
\leq\quad & \sup_{k\geq 1}k^{-1/2}\left(\CE|l_{\calE_i}\left(f_*(X_i)-f(X_i)\right)-l_{\calE_i}\left(f_*(X_i)-g(X_i)\right)|^k\right)^{1/k}\\
\leq\quad & \sup_{k\geq 1}k^{-1/2}2\left(\EE|2\calE|^k\right)^{1/k}\leq 4\sigma. 
\end{align*}
\end{proof}

\begin{proof}[Proof of Lemma \ref{lemma:Zn_bound}]
Define $T: \mathbb{R} \rightarrow \mathbb{R}$ such that for any $x \in \mathbb{R}$, 
\begin{equation*}
T(x) := \EE \left | x + \calE \right | - \EE \left | \calE \right |.
\end{equation*}
By basic calculus,  $T'(x) = \PP(-x\leq \calE\leq x)$, and $T''(x) = p_{\calE}(x)+p_{\calE}(-x)>0$ holds for $x$ sufficiently small. Hence $T(x)$ is increasing and convex. In particular, we have 
\begin{equation*}
T'(0) = 0, \quad T''(0) = 2 p_{\calE} (0).
\end{equation*}
Note that $p_{\calE}(x)$ is continuous around zero, then for $x$ sufficiently small, we have $T''(x)=p_{\calE}(x)+p_{\calE}(-x)>p_{\calE}(0)$. Now we pick $c_0=\frac{1}{2}p_{\calE}(0)$. Then, Taylor's expansion yields
\begin{equation*}
T(x) = T(0)+T'(0)x+\frac{1}{2}T''(\eta_x)x^2 \geq c_0x^2
\end{equation*}
where $\eta_x\in(0,x)$ is some real number. Finally, by the monotonicity and convexity of $T$, 
\begin{align*}
Z_n(f) &= \frac{1}{n}\sum_{i=1}^n T\left(|f_*(X_i)-f(X_i)|\right)\geq T\left(\frac{1}{n}\sum_{i=1}^n|f_*(X_i)-f(X_i)|\right)\geq T(\sigma_n)\geq c_0\sigma_n^2.
\end{align*}
\end{proof}

\end{document}